\providecommand{\U}[1]{\protect\rule{.1in}{.1in}}
\newtheorem{theorem}{Theorem}
\newtheorem{criterion}[theorem]{Criterion}
\newtheorem{definition}[theorem]{Definition}
\newtheorem{example}[theorem]{Example}
\newtheorem{proposition}[theorem]{Proposition}
\newtheorem{remark}[theorem]{Remark}
\newenvironment{proof}[1][Proof]{\noindent\textbf{#1.} }{\ \rule{0.5em}{0.5em}}
\begin{document}

\title{Controllability Metrics on Networks with Linear Decision Process-type
Interactions and Multiplicative Noise}
\author{Tidiane Diallo\footnotemark[1]
\and Dan Goreac\thanks{Universit\'{e} Paris-Est, LAMA (UMR 8050), UPEMLV, UPEC,
CNRS, F-77454, Marne-la-Vall\'{e}e, France, Dan.Goreac@u-pem.fr{}}
\thanks{\textbf{Acknowledgement.} The work of the second author has been
partially supported by he French National Research Agency project PIECE,
number \textbf{ANR-12-JS01-0006.}}}
\maketitle

\begin{abstract}
This paper aims at the study of controllability properties and induced
controllability metrics on complex networks governed by a class of (discrete
time) linear decision processes with multiplicative noise. The dynamics are
given by a couple consisting of a Markov trend and a linear decision process
for which both the "deterministic" and the noise components rely on
trend-dependent matrices. We discuss approximate, approximate null and exact
null-controllability. Several examples are given to illustrate the links
between these concepts and to compare our results with their continuous-time
counterpart (given in \cite{GoreacMartinez2015}). We introduce a class of
backward stochastic Riccati difference schemes (BSRDS) and study their
solvability for particular frameworks. These BSRDS allow one to introduce
Gramian-like controllability metrics. As application of these metrics, we
propose a minimal intervention-targeted reduction in the study of gene networks.

\end{abstract}

\textbf{AMS Classification}: 93B05, 60J05, 90C40, 93E03, 92C42

\textbf{Keywords: }linear decision process; null-controllability; backward
stochastic Riccati difference scheme; controllability metric; gene networks;
phage $\lambda$

\section{Introduction}

We focus on a particular class of discrete-time decision processes described
by a couple denoted $\left(  L,X\right)  $ and consisting of a Markovian trend
and a linearly trend-based updated component. This kind of processes naturally
appear in the study of complex systems (such as regulatory gene networks). In
this setting, the trend component corresponds to a finite family of DNA
configurations which induce regime changes on functional components (usual
proteins) $X.$ Decisions are assumed to be made at expression level in order
to obtain suitable behavior of $X.$ We try to give a mathematical answer to
the following questions. Given a family of possible actions, what are the
minimal interventions to be selected in order to guarantee a targeted
response. Second, how can this be quantified through a metric at the level of
biochemical reactions network ? To answer these questions, we envisage
invariance and Gramian-type descriptions of controllability concepts. This
paper can be seen as a discrete-time counterpart of \cite{GoreacMartinez2015}
in which piecewise deterministic Markov processes of switch type are
considered. Together, the papers cover the two usual points of view over
controlled switch processes with linear updating : the averaged, piecewise
deterministic (macroscopic) perspective (in \cite{GoreacMartinez2015}) and the
marked point process (closer to microscopic perspective in this paper).

The process $L$ is assumed to be a finite-state Markov process on a filtered
probability space $\left(  \Omega,\mathbb{F},\mathbb{P}\right)  ,$ with
transition measure $Q$ and taking its values in $\mathcal{B=}\left\{
e_{1},e_{2},...,e_{p}\right\}  $, for some integer $p\geq2.$ Without loss of
generality, $\Omega$ is set to be the discrete sample space $\Omega
=\mathcal{B}^{\mathbb{N}}$ and we assume the filtration $\mathbb{F=}\left(
\mathcal{F}_{n}\right)  _{n\geq0}$ to be the natural one associated to $L.$
Following \cite{CohenElliot2011ProgrProb}, \cite{Cohen_Elliott_SPA_2010} and
without loss of generality, we assume that $\mathcal{B}$ is the standard basis
of vectors of $%
\mathbb{R}
^{p}$. We introduce the martingale $M$ by setting%
\[
M_{n}:=\sum_{k=0}^{n-1}\left[  L_{k+1}-\mathbb{E}\left[  L_{k+1}%
/\mathcal{F}_{k}\right]  \right]  ,
\]
for $1\leq n\leq N,$ with the obvious convention $M_{0}=0$. As usual, we let
$\Delta M_{n}:=M_{n}-M_{n-1},$ for all $n\geq1.$ We will be focusing on a
class of decision processes $X$ on some state space $\mathbb{R}^{m}$, for
$m\geq1$ and controlled by $d$-dimensional processes, for some $d\geq1$. The
evolution is given by linear updating and multiplicative noise
\begin{equation}
\left\{
\begin{array}
[c]{l}%
X_{n+1}^{x,u}\left(  \omega\right)  =A_{n}\left(  \omega\right)  X_{n}%
^{x,u}\left(  \omega\right)  +Bu_{n+1}\left(  \omega\right)  +\sum_{i=1}%
^{p}\left\langle \Delta M_{n+1}\left(  \omega\right)  ,e_{i}\right\rangle
C_{i,n}\left(  \omega\right)  X_{n}^{x,u}\left(  \omega\right)  ,\text{ }\\
X_{0}^{x,u}=x\in%
\mathbb{R}
^{m}.
\end{array}
\right.  \label{MDP2}%
\end{equation}
The process $A$ is $\mathbb{R}^{m\times m}$-valued and $\mathbb{F-}$adapted$,$
the matrix $B\in\mathbb{R}^{m\times d},$ and $C_{i}$ are $\mathbb{R}^{m\times
m}$-valued and $\mathbb{F-}$adapted processes for all $1\leq i\leq p$. The $%
\mathbb{R}
^{d}$-valued control process $u$ is taken to be square-integrable $\mathbb{F}%
$-predictable. The set of all $\mathbb{F}$-predictable processes is denoted by
$\mathcal{P}red$. Whenever no confusion is at risk, we will drop the
dependency on $\omega$. The reader may want to note that this provides a
slightly more general framework than Markov decision processes since the
coefficients are adapted (i.e. functions of the time parameter $n$ and the
vector $\left(  L_{0},L_{1},...L_{n}\right)  ).$ On the other hand, the
transition measure has a particular form.

The first aim of the paper is to characterize controllability properties for
systems driven by (\ref{MDP2}) i.e. the possibility to direct the process
towards a coherent target. For controlled linear deterministic systems
$\overset{\cdot}{X}_{t}=AX_{t}+Bu_{t}$, the controllability properties are
summarized by the celebrated Kalman criterion stating that $Rank\left[
B\text{ }AB\text{ }A^{2}B\text{ }...A^{m-1}B\right]  =m.$ Similar assertions
are valid for discrete systems $X_{n+1}=AX_{n}+Bu_{n+1}$. \ This can equally
be extended for Markov decision processes driven by non-random coefficients
and additive noise of type $X_{n+1}=AX_{n}+Bu_{n+1}+\xi_{n+1}$. However, for
continuous-time controlled linear systems with multiplicative stochastic
perturbations, this condition is no longer sufficient. For examples pointing
to this direction, the reader is referred to
\cite{Buckdahn_Quincampoix_Tessitore_2006} or \cite{G17} (for Brownian
perturbations), \cite{G10} or \cite{GoreacMartinez2015} (for continuous-time
switch processes).

One can, alternatively, study the dual notion of observability via Hautus'
test as in \cite{Hautus}. The criteria involve algebraic invariance notions
which are independent of the space on which they are studied. For
infinite-dimensional settings, the reader is referred to
\cite{Schmidt_Stern_80}, \cite{Curtain_86}, \cite{russell_Weiss_1994},
\cite{Jacob_Zwart_2001}, \cite{Jacob_Partington_2006}, etc.

In the continuous-time stochastic setting, duality techniques lead to backward
stochastic differential equations (BSDE introduced in \cite{Pardoux_Peng_90}).
With these tools, exact (terminal-) controllability of Brownian-driven control
systems is linked to a full rank condition in \cite{Peng_94}. When the control
is absent from the noise term, one studies approximate controllability, resp.
approximate null-controllability. Invariance criteria are given in
\cite{Buckdahn_Quincampoix_Tessitore_2006} for the control-free noise and
\cite{G17} for the general Brownian setting. In the case when the stochastic
perturbation is of jump-type, exact controllability of continuous-time
processes cannot be achieved. This follows from incompleteness (cf.
\cite{Merton_76}) and one has to concentrate on approximate controllability.

For continuous-time control systems with Brownian noise, approximate and
approximate null-controllability notions coincide (cf. \cite{G17}). This is no
longer the case (see \cite{G1}) when an infinite-dimensional component of
mean-field type governs the Brownian-driven systems. Various methods can be
employed in infinite-dimensional state space Brownian setting leading to
partial results (see \cite{Fernandez_Cara_Garrido_atienza_99},
\cite{Sarbu_Tessitore_2001}, \cite{Barbu_Rascanu_Tessitore_2003}, \cite{G16}).

The main goal of the first part of our paper is to study the controllability
properties of the Markov decision process with linear updating and
multiplicative noise perturbations. It can be seen as a discrete-time
counter-part of \cite{GoreacMartinez2015} and, to some extent,
\cite{Sarbu_Tessitore_2001}. We begin with a duality result between
controllability and observability in Section \ref{Section2.1}. To address
observability, we consider some adjoint process satisfying a backward
difference scheme. Its construction is close to backward stochastic difference
equations (see, for example, \cite{CohenElliot2011ProgrProb},
\cite{Cohen_Elliott_SPA_2010}, \cite{Cohen_Elliott_SIAM_2011}). The first main
result of the paper (Theorem \ref{PropNSC}) gives two characterizations for
approximate null-controllability and a duality criterion for approximate
controllability. It equally states the equivalence between approximate and
exact null-controllability. However, unlike the continuous-time frameworks
(compare with \cite{Buckdahn_Quincampoix_Tessitore_2006} for Brownian systems
and \cite[Section 4.1, Criterion 3]{GoreacMartinez2015} for jump-systems), in
discrete-time, null-controllability does not imply approximate
controllability. This surprising behavior is illustrated in Example
\ref{ExpNullCtrlNotAppCtrl}.

To construct a controllability metric, we concentrate on Gramian-inspired
techniques in Sections \ref{Section2.3} and \ref{Section2.4}. We show in
Example \ref{ExpKalmanNotNullCtrl} that the deterministic Gramian $%
{\displaystyle\sum\limits_{i=1}^{N}}
A^{i-1}BB^{T}\left(  A^{T}\right)  ^{i-1}$ does not provide a
null-controllability metric. We propose a backward stochastic Riccati
difference scheme (BSRDS) providing the adequate controllability metric. The
link between this BSRDS and null-controllability makes the object of our
second main result (Theorem \ref{TheoremRiccati}). To our best knowledge,
these particular schemes are new to the very rich literature on Riccati
techniques. Let us just mention that Riccati methods in connection to linear
stochastic control problems have been extensively employed in both continuous
(cf. \cite{yong_zhou_99}, etc.) and discrete setting (e.g.
\cite{CostaFragosoMarques}, \cite{CostaOliveira_2012}, etc.). The solvability
of the BSRDS and explicit iterative constructions of the solution in
particular frameworks make the object of Section \ref{Section2.5}. We study
the case of non-random coefficients in Proposition \ref{PropBSRDSNonRandom}.
In Proposition \ref{PropBSRDSC=0}, we state the solvability of BSRDS with
random coefficients in the absence of multiplicative noise. Finally, in
Section \ref{Section2.6}, we show that the invariance techniques developed in
\cite{Buckdahn_Quincampoix_Tessitore_2006} for Brownian perturbations and
adapted to trend-dependent jump-systems in \cite{GoreacMartinez2015} are not
suitable in discrete-time. For non-random coefficients, an invariance
condition (similar to \cite[Criterion 3]{GoreacMartinez2015}) is necessary to
achieve null-controllability (cf. Proposition \ref{PropNecInvariance}).
However, it is not sufficient, as shown in Example \ref{ExpN1NotSuff}.
Concerning the second framework, in absence of multiplicative noise, the
continuous-time condition provided in \cite[Section 4.2, Criterion
4]{GoreacMartinez2015} is neither necessary (see Example \ref{ExpncC=0NotNec})
nor sufficient (Example \ref{ExpncC=0NotSuff}).

The aim of Section \ref{Section3} is to provide a possible application of
controllability metrics to biological networks. The mathematical
considerations are motivated by the notion of (sub)modularity (see
\cite[Section 4]{NemhauserWolseyFisher_78}, \cite{Lovasz:1983aa}, etc.) as
well as the recent applications to power electronic actuator placement in the
preprint \cite{SummersCortesiLygeros_2014}. We describe the optimization
problems appearing when one works with several (possible) control matrices and
wishes to keep controllability features by selecting a minimal dimension of
the control space. To end the section, we give a toy model inspired by
bacteriophage $\lambda$ in \cite{hasty_pradines_dolnik_collins_00} and analyze
different scenarios leading to null-controllability.

Finally, Section \ref{Section4} gathers the proofs of our mathematical assertions.

\section{The Main Concepts and Results}

\subsection{Controllability and Duality\label{Section2.1}}

We begin with recalling the following controllability concepts.

\begin{definition}
i. The system (\ref{MDP2}) is said to be controllable at time $N\geq1$ if for
every initial data $x\in%
\mathbb{R}
^{m}$ and every $%
\mathbb{R}
^{m}-$valued, $\mathcal{F}_{N}-$measurable square integrable random variable
$\xi$, there exists a predictable control process $u\in\mathcal{P}red$ such
that $X_{N}^{x,u}=\xi,$ $\mathbb{P-}$a.s. (i.e. $X_{N}^{x,u}\left(
\omega\right)  =\xi\left(  \omega\right)  ,$ for $\mathbb{P}$-almost all
$\omega\in\Omega$.)

ii. The system (\ref{MDP2}) is said to be null-controllable at time $N\geq1$
if the previous property holds true for $\xi=0.$

iii. The system (\ref{MDP2}) is said to be approximately controllable at time
$N\geq1$ if for every initial data $x\in%
\mathbb{R}
^{m}$ and every $%
\mathbb{R}
^{m}-$valued, $\mathcal{F}_{N}-$measurable square integrable random variable
$\xi$ and every $\varepsilon>0,$ there exists a predictable control process
$u^{\varepsilon}\in\mathcal{P}red$ such that $\mathbb{E}\left[  \left\vert
X_{N}^{x,u^{\varepsilon}}-\xi\right\vert ^{2}\right]  \leq\varepsilon$.

iv. The system (\ref{MDP2}) is said to be approximately null-controllable at
time $N\geq1$ if the previous property holds true for $\xi=0.$
\end{definition}

To the decision process (\ref{MDP2}), one can associate an adjoint process (or
an adjoint couple) as follows. For every $\mathcal{F}_{N}$-measurable square
integrable random variable $\xi,$ we introduce the adjoint couple $\left(
Y^{N,\xi},Z^{N,\xi}\right)  $ $\ $consisting in an $%
\mathbb{R}
^{m}$-valued (resp. $%
\mathbb{R}
^{m\times p}$-valued) adapted process by setting%
\begin{equation}
\left\{
\begin{array}
[c]{l}%
Y_{N}^{N,\xi}:=\xi,\text{ }\\
Y_{n}^{N,\xi}:=A_{n}^{T}\mathbb{E}\left[  Y_{n+1}^{N,\xi}/\mathcal{F}%
_{n}\right]  +\sum_{i=1}^{p}C_{i,n}^{T}Z_{n}^{N,\xi}\mathbb{E}\left[
\left\langle \Delta M_{n+1},e_{i}\right\rangle \Delta M_{n+1}/\mathcal{F}%
_{n}\right]  ,\text{ }\\
\text{where }Y_{n+1}^{N,\xi}=\mathbb{E}\left[  Y_{n+1}^{N,\xi}/\mathcal{F}%
_{n}\right]  +Z_{n}^{N,\xi}\Delta M_{n+1},\text{ for all }0\leq n\leq N-1.
\end{array}
\right.  \label{AdjointCouple}%
\end{equation}
The existence (and uniqueness up to an equivalence in the sense of
\cite[Definition 2]{Cohen_Elliott_SPA_2010}) of processes $Z$ satisfying the
last property is standard. We refer the interested reader to \cite[Corollary
1]{Cohen_Elliott_SPA_2010} or \cite[Corollary 3.1.1]{CohenElliot2011ProgrProb}
and references therein.

The first result of our paper provides the following characterization of controllability.

\begin{theorem}
\label{PropNSC}i) The system (\ref{MDP2}) is approximately null-controllable
in time $N>0$ if and only if every solution $\left(  Y_{n}^{N,\xi}%
,Z_{n}^{N,\xi}\right)  $ of the scheme (\ref{AdjointCouple}) satisfying
$\mathbb{E}\left[  Y_{n}^{N,\xi}/\mathcal{F}_{n-1}\right]  \in\ker\left(
B^{T}\right)  ,$ $\mathbb{P}$-a.s., for all $1\leq n\leq N,$ equally satisfies
$Y_{0}^{N,\xi}=0,$ $\mathbb{P}$-a.s.

ii) The system (\ref{MDP2}) is approximately controllable in time $N>0$ if and
only if every solution $\left(  Y_{n}^{N,\xi},Z_{n}^{N,\xi}\right)  $ of the
scheme (\ref{AdjointCouple}) satisfying $\mathbb{E}\left[  Y_{n}^{N,\xi
}/\mathcal{F}_{n-1}\right]  \in\ker\left(  B^{T}\right)  ,$ $\mathbb{P}$-a.s.,
for all $1\leq n\leq N,$ equally satisfies $\mathbb{E}\left[  \xi
/\mathcal{F}_{n-1}\right]  =0,$ $\mathbb{P}$-a.s.

iii) The system (\ref{MDP2}) is approximately null-controllable in time $N>0$
if and only if it is (exactly) null-controllable (in time $N>0$). The
necessary and sufficient condition for null-controllability is the existence
of some constant $k>0$ such that%
\begin{equation}
\left\vert Y_{0}^{N,\xi}\right\vert ^{2}\leq k\mathbb{E}\left[  \sum_{n=1}%
^{N}\left\langle BB^{T}\mathbb{E}\left[  Y_{n}^{N,\xi}/\mathcal{F}%
_{n-1}\right]  ,\mathbb{E}\left[  Y_{n}^{N,\xi}/\mathcal{F}_{n-1}\right]
\right\rangle \right]  , \label{0ctrlInequality}%
\end{equation}
for all $\left(  Y_{n}^{N,\xi},Z_{n}^{N,\xi}\right)  $ satisfying
(\ref{AdjointCouple}).
\end{theorem}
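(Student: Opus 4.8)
The plan is to reduce all three assertions to finite-dimensional linear algebra by combining a summation-by-parts duality between the forward system (\ref{MDP2}) and the adjoint scheme (\ref{AdjointCouple}) with the fact that every $L^2$ space in sight is finite-dimensional. Indeed, since $L$ is finite-state and $\mathbb{F}$ is its natural filtration, each $\mathcal{F}_n$ has finitely many atoms, so $L^2(\mathcal{F}_N;\mathbb{R}^m)$ and each $L^2(\mathcal{F}_{n-1};\mathbb{R}^d)$ are finite-dimensional. By the cited representation result the map $\xi\mapsto(Y^{N,\xi},Z^{N,\xi})$ is well defined, so ``every solution of the scheme'' may be read as ``every $\xi$''.

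\textbf{Step 1 (the duality identity).} Fixing $\xi$ and writing $\widehat{Y}_{n+1}:=\mathbb{E}[Y_{n+1}^{N,\xi}/\mathcal{F}_n]$, I would compute $\mathbb{E}[\langle X_{n+1}^{x,u},Y_{n+1}^{N,\xi}\rangle/\mathcal{F}_n]$. Using that $A_n,C_{i,n},X_n^{x,u}$ are $\mathcal{F}_n$-measurable, that $u_{n+1}$ is predictable, that $\mathbb{E}[\Delta M_{n+1}/\mathcal{F}_n]=0$, and the representation $Y_{n+1}^{N,\xi}=\widehat{Y}_{n+1}+Z_n^{N,\xi}\Delta M_{n+1}$, the multiplicative-noise cross term produces exactly $\langle X_n^{x,u},\sum_i C_{i,n}^T Z_n^{N,\xi}\mathbb{E}[\langle\Delta M_{n+1},e_i\rangle\Delta M_{n+1}/\mathcal{F}_n]\rangle$. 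Together with the drift term $\langle X_n^{x,u},A_n^T\widehat{Y}_{n+1}\rangle$ this collapses, by the very definition of $Y_n^{N,\xi}$, to $\langle X_n^{x,u},Y_n^{N,\xi}\rangle$, leaving only the control contribution $\langle u_{n+1},B^T\widehat{Y}_{n+1}\rangle$. Summing over $n$, taking expectations, and reindexing yields the key identity
\[
\mathbb{E}[\langle X_N^{x,u},\xi\rangle]=\langle x,Y_0^{N,\xi}\rangle+\sum_{n=1}^N\mathbb{E}[\langle u_n,B^T\mathbb{E}[Y_n^{N,\xi}/\mathcal{F}_{n-1}]\rangle].
\]
I expect this identity---matching the noise term precisely against the adjoint scheme---to be the main technical obstacle; everything afterwards is soft.

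\textbf{Step 2 (operator reformulation).} Writing $X_N^{x,u}=X_N^{x,0}+X_N^{0,u}$, I would introduce $R:\mathbb{R}^m\to L^2(\mathcal{F}_N;\mathbb{R}^m)$, $Rx:=X_N^{x,0}$, and the linear map $\mathcal{L}:\mathcal{P}red_N\to L^2(\mathcal{F}_N;\mathbb{R}^m)$, $\mathcal{L}u:=X_N^{0,u}$, where $\mathcal{P}red_N=\bigoplus_{n=1}^N L^2(\mathcal{F}_{n-1};\mathbb{R}^d)$. Finite-dimensionality makes $\mathrm{Range}(\mathcal{L})$ closed. The identity of Step~1 identifies the adjoints as $R^*\xi=Y_0^{N,\xi}$ and $(\mathcal{L}^*\xi)_n=B^T\mathbb{E}[Y_n^{N,\xi}/\mathcal{F}_{n-1}]$, so $\ker\mathcal{L}^*=\{\xi:\mathbb{E}[Y_n^{N,\xi}/\mathcal{F}_{n-1}]\in\ker(B^T)\text{ for all }n\}$. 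Approximate null-controllability reads $\mathrm{Range}(R)\subseteq\overline{\mathrm{Range}(\mathcal{L})}$, approximate controllability reads $\overline{\mathrm{Range}(\mathcal{L})}=L^2(\mathcal{F}_N;\mathbb{R}^m)$, and exact null-controllability reads $\mathrm{Range}(R)\subseteq\mathrm{Range}(\mathcal{L})$.

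\textbf{Step 3 (conclusions).} Part (i) follows from $\overline{\mathrm{Range}(\mathcal{L})}=(\ker\mathcal{L}^*)^\perp$: the inclusion $\mathrm{Range}(R)\subseteq(\ker\mathcal{L}^*)^\perp$ is equivalent to $\ker\mathcal{L}^*\subseteq\ker R^*$, i.e. to the implication $[\mathbb{E}[Y_n^{N,\xi}/\mathcal{F}_{n-1}]\in\ker(B^T)\ \forall n]\Rightarrow Y_0^{N,\xi}=0$. Part (ii) follows identically from $\overline{\mathrm{Range}(\mathcal{L})}=L^2\iff\ker\mathcal{L}^*=\{0\}$, translating the triviality of $\ker\mathcal{L}^*$ into the stated condition on $\xi$. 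For part (iii), closedness of $\mathrm{Range}(\mathcal{L})$ gives $\overline{\mathrm{Range}(\mathcal{L})}=\mathrm{Range}(\mathcal{L})$, so approximate and exact null-controllability coincide; and the observability inequality (\ref{0ctrlInequality}) is exactly the range-inclusion (Douglas-type) criterion $\mathrm{Range}(R)\subseteq\mathrm{Range}(\mathcal{L})\iff\exists k>0:\ \|R^*\xi\|^2\le k\|\mathcal{L}^*\xi\|^2$, once one notes $\|R^*\xi\|^2=|Y_0^{N,\xi}|^2$ and $\|\mathcal{L}^*\xi\|^2=\mathbb{E}[\sum_{n=1}^N\langle BB^T\mathbb{E}[Y_n^{N,\xi}/\mathcal{F}_{n-1}],\mathbb{E}[Y_n^{N,\xi}/\mathcal{F}_{n-1}]\rangle]$. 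Establishing this last equivalence (elementary in finite dimension, via factorizing $R$ through $\mathcal{L}$) is the one remaining point I would record carefully.
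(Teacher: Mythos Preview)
Your proposal is correct and follows essentially the same route as the paper: the same summation-by-parts duality identity, the same two operators $R_N^2 x=X_N^{x,0}$ and $R_N^1 u=X_N^{0,u}$ with adjoints read off from that identity, and the same use of finite-dimensionality of $L^2(\mathcal{F}_N;\mathbb{R}^m)$ to close the range and invoke the Douglas-type range-inclusion criterion (the paper cites Da~Prato--Zabczyk, Appendix~B, for the latter). The only cosmetic difference is notation; even the paper's proof has the labels for assertions i) and ii) swapped in its text, which your version straightens out.
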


The proof is postponed to Section \ref{Section4}. The first two assertions are
proven by taking convenient controllability operators and identifying their
duals. The third assertion makes use of these duals and the finite-dimensional setting.

\subsection{An Alternative Characterization and an Example\label{Section2.2}}

When the linear coefficient $A$ is invertible, we are able to restate the
null-controllability criterion given in Theorem \ref{PropNSC} iii) by
interpreting the adjoint couple as a decision process (where the second
component of the couple is an arbitrary predictable control). We also give an
example showing that, in the context of discrete processes,
null-controllability is, in all generality, strictly weaker that approximate-controllability.

From now on, unless stated otherwise, the matrix $A_{n}\left(  \omega\right)
$ is assumed to be invertible for $\mathbb{P-}$almost all $\omega\in\Omega$
and all $n\geq0$. The reader will note that $\left(  Y^{N,\xi},Z^{N,\xi
}\right)  $ given by (\ref{AdjointCouple}) can be interpreted in connection to
a (forward) decision process by picking $v_{n+1}:=Z_{n}^{N,\xi}$ and setting%
\begin{equation}
\left\{
\begin{array}
[c]{l}%
y_{0}:=Y_{0}^{N,\xi},\text{ }y_{0}^{y_{0},v}=y_{0},\\
y_{n+1}^{y_{0},v}:=\left[  A_{n}^{T}\right]  ^{-1}\left(  y_{n}^{y_{0},v}%
-\sum_{i=1}^{p}C_{i,n}^{T}v_{n+1}\mathbb{E}\left[  \left\langle \Delta
M_{n+1},e_{i}\right\rangle \Delta M_{n+1}/\mathcal{F}_{n}\right]  \right)
+v_{n+1}\Delta M_{n+1},
\end{array}
\right.  \label{MDPdual}%
\end{equation}
for all $0\leq n\leq N-1.$

\begin{remark}
1. When $A$ is not invertible, the admissible controls should be such that
\[
y_{n}^{y_{0},v}-\sum_{i=1}^{p}C_{i,n}^{T}v_{n+1}\mathbb{E}\left[  \left\langle
\Delta M_{n+1},e_{i}\right\rangle \Delta M_{n+1}/\mathcal{F}_{n}\right]
\in\operatorname{Im}\left(  A_{n}^{T}\right)  ,
\]
where $\operatorname{Im}$ stands for the image of the linear operator.
Nevertheless, the connection is still preserved.

2. The adjoint process is motivated by the duality techniques in the Brownian
case (e.g. in \cite{Buckdahn_Quincampoix_Tessitore_2006}). These arguments
concern backward stochastic differential equations. The specialization of this
concept to discrete-time processes is the notion of backward stochastic
difference equation (e.g. \cite{Cohen_Elliott_SPA_2010},
\cite{Cohen_Elliott_SIAM_2011}). In view of the essential bijection
requirement (cf. \cite[Theorem 2]{Cohen_Elliott_SPA_2010}, \cite[Theorem
1.2]{Cohen_Elliott_SIAM_2011}), asking for $A$ to be invertible does not
appear to be a drawback.
\end{remark}

In this framework, the third assertion of Theorem \ref{PropNSC} can be
interpreted as follows.

\begin{criterion}
\label{CritNullCtrl}The system (\ref{MDP2}) is approximately (and exactly)
null-controllable if and only if there exists some $k>0$ such that for every
$y_{0}\in%
\mathbb{R}
^{m}$ and every $\mathbb{F-}$predictable, $%
\mathbb{R}
^{m\times p}$-valued sequence $\left(  v_{n}\right)  _{1\leq n\leq N},$ one
has
\[
\left\vert y_{0}\right\vert ^{2}\leq k\mathbb{E}\left[  \sum_{n=0}%
^{N-1}\left\vert B^{T}\mathbb{E}\left[  y_{n+1}^{y_{0},v}/\mathcal{F}%
_{n}\right]  \right\vert ^{2}\right]  ,
\]
where $y^{y_{0},v}$ is the decision process defined by (\ref{MDPdual}).
\end{criterion}

In the continuous-time framework, when the controlled linear systems are
driven by non-random and homogeneous coefficients (i.e. systems for which $A$
and $C$ are constant matrices independent of $n$), it has been proven that
approximate null-controllability and approximate controllability are
equivalent. The reader is referred to \cite[Theorem 1.3]%
{Buckdahn_Quincampoix_Tessitore_2006} (for Brownian setting) and to
\cite[Theorem 2.2]{G10} and \cite[Criterion 3]{GoreacMartinez2015} for jump
systems. The following example shows that, in the case of discrete-time
processes, one can have (exact) null-controllability without having
approximate controllability.

\begin{example}
\label{ExpNullCtrlNotAppCtrl}To this purpose, let us take $p=2$ and the
transition matrix $Q=\left(
\begin{array}
[c]{cc}%
\frac{1}{2} & \frac{1}{2}\\
\frac{1}{2} & \frac{1}{2}%
\end{array}
\right)  .$ We consider the time horizon $N=2,$ the state space dimension
$m=2$ and the control space dimension $d=1.$ Moreover, we consider
\[
A_{n}=\left(
\begin{array}
[c]{cc}%
0 & 1\\
1 & 0
\end{array}
\right)  ,\text{ }B=\left(
\begin{array}
[c]{c}%
0\\
1
\end{array}
\right)  ,\text{ }C_{i,n}=\left(
\begin{array}
[c]{cc}%
0 & \left(  -1\right)  ^{i+1}\\
0 & 0
\end{array}
\right)  ,\text{ for }i\in\left\{  1,2\right\}  \text{ and }n\geq0.
\]
Then, the decision process (\ref{MDP2}) becomes%
\[
X_{0}^{x,u}=x=\left(
\begin{array}
[c]{c}%
x_{1}\\
x_{2}%
\end{array}
\right)  ,\text{ }X_{1}^{x,u}=\left(
\begin{array}
[c]{c}%
\left(  1+\left\langle L_{1},e_{1}-e_{2}\right\rangle \right)  x_{2}\\
x_{1}+u_{1}%
\end{array}
\right)  ,\text{ }X_{2}^{x,u}=\left(
\begin{array}
[c]{c}%
\left(  x_{1}+u_{1}\right)  \left(  1+\left\langle L_{2},e_{1}-e_{2}%
\right\rangle \right) \\
\left(  1+\left\langle L_{1},e_{1}-e_{2}\right\rangle \right)  x_{2}+u_{2}%
\end{array}
\right)  .
\]
We consider $u_{1}=-x_{1}$ and $u_{2}=-\left(  1+\left\langle L_{1}%
,e_{1}-e_{2}\right\rangle \right)  x_{2}$ to conclude that the system is
exactly null-controllable in 2 steps. Nevertheless, by considering
$\xi:=\left(
\begin{array}
[c]{c}%
\left\langle L_{2},e_{1}-e_{2}\right\rangle \\
0
\end{array}
\right)  ,$ one has
\[
\mathbb{E}\left[  \left\vert X_{2}^{x,u}-\xi\right\vert ^{2}\right]
\geq\mathbb{E}\left[  \left[  \left(  x_{1}+u_{1}\right)  +\left(  x_{1}%
+u_{1}-1\right)  \left\langle L_{2},e_{1}-e_{2}\right\rangle \right]
^{2}\right]  \geq\frac{1}{2},
\]
for all $x\in%
\mathbb{R}
^{2}$ and all $u_{1}\in%
\mathbb{R}
$. The system turns out not to be approximately controllable at time $N=2$.
(This holds true for $N\geq2$ by simply replacing, in the definition of $\xi$,
$L_{2}$ with $L_{N}$. In fact, we have proven something stronger: the system
is not even exactly terminal controllable; see \cite{Peng_94} for a comparison
with the Brownian setting).
\end{example}

\begin{remark}
To prove null-controllability, one can, alternatively, rely on Criterion
\ref{CritNullCtrl}. For $y_{0}=\left(
\begin{array}
[c]{c}%
y_{0}^{1}\\
y_{0}^{2}%
\end{array}
\right)  \in%
\mathbb{R}
^{2}$ and a family of $\mathbb{F}$-predictable, $%
\mathbb{R}
^{2\times2}$-valued controls $v_{1}=\left(
\begin{array}
[c]{cc}%
v_{1}^{1,1} & v_{1}^{1,2}\\
v_{1}^{2,1} & v_{1}^{2,2}%
\end{array}
\right)  ,$ $v_{2}=\left(
\begin{array}
[c]{cc}%
v_{2}^{1,1} & v_{2}^{1,2}\\
v_{2}^{2,1} & v_{2}^{2,2}%
\end{array}
\right)  $,\ simple (yet fastidious) computations show that%
\[
\mathbb{E}\left[  \sum_{n=0}^{1}\left\vert B^{T}\mathbb{E}\left[
y_{n+1}^{y_{0},v}/\mathcal{F}_{n}\right]  \right\vert ^{2}\right]  =\left(
y_{0}^{1}\right)  ^{2}+\mathbb{E}\left[  \left(  y_{0}^{2}+\left(
\left\langle L_{1},e_{1}-e_{2}\right\rangle -\frac{1}{2}\right)  \frac
{v_{1}^{1,1}-v_{1}^{1,2}}{2}\right)  ^{2}\right]  \geq\frac{1}{2}\left\vert
y_{0}\right\vert ^{2}.
\]
One concludes to the exact null-controllability by applying Criterion
\ref{CritNullCtrl}.
\end{remark}

\subsection{The Deterministic Controllability Metric Is
Insufficient\label{Section2.3}}

\bigskip A simple glance at the inequality in Criterion \ref{CritNullCtrl}
allows one to infer that the right-hand term (i.e. $\mathbb{E}\left[
\sum_{n=0}^{N-1}\left\vert B^{T}\mathbb{E}\left[  y_{n+1}^{y_{0}%
,v}/\mathcal{F}_{n}\right]  \right\vert ^{2}\right]  $) should provide a
quadratic function of the initial data $y_{0}$ which is positive-definite when
the initial system is null-controllable. In the deterministic framework ($C=0$
and non-random, constant $A$), the controllability criterion is given by the
celebrated Kalman condition
\[
Rank\left[  B\text{ }AB\text{ }A^{2}B\text{ }...A^{N-1}B\right]  =m.
\]
By taking the expectation in (\ref{MDP2}), one gets that this condition is
necessary for the system governed by non-random $A,B,C$ to be approximately
null-controllable. This condition is equivalent (cf. \cite[Theorem
2.16]{CostaFragosoMarques} (assertions 2 and 3) to the matrix
\begin{equation}
p_{0}:=%
{\displaystyle\sum\limits_{i=1}^{N}}
A^{i-1}BB^{T}\left(  A^{T}\right)  ^{i-1}\text{ } \label{metricDet}%
\end{equation}
being of full rank. In this case, the controllability (pseudo)norm given by $%
\mathbb{R}
^{m}\ni y_{0}\mapsto\left(  \left\langle p_{0}y_{0},y_{0}\right\rangle
\right)  ^{\frac{1}{2}}$ is a norm. So the obvious question one asks oneself
is whether the same norm is actually sufficient to get stochastic
null-controllability. The answer is negative (see example hereafter).. Unlike
the additive case, the presence of multiplicative noise induces a change in
the controllability condition. This is not surprising for our reader familiar
with the stochastic framework. Indeed, the invariance conditions
characterizing approximate null-controllability in \cite[Theorem
1.3]{Buckdahn_Quincampoix_Tessitore_2006} or \cite[Criterion 3]%
{GoreacMartinez2015} involve the stochastic component (i.e. $C$). The
following example shows that, in the discrete framework, one may have Kalman's
condition and not achieve the null-controllability of the stochastic system.

\begin{example}
\label{ExpKalmanNotNullCtrl}To this purpose, let us take $p=2$ and the
transition matrix $Q=\left(
\begin{array}
[c]{cc}%
\frac{1}{2} & \frac{1}{2}\\
\frac{1}{2} & \frac{1}{2}%
\end{array}
\right)  .$ We consider the time horizon $N=2,$ the state space dimension
$m=2$ and the control space dimension $d=1.$ Moreover, we consider
\[
A_{n}=\left(
\begin{array}
[c]{cc}%
0 & 1\\
1 & 0
\end{array}
\right)  ,\text{ }C_{i,n}=\left(  -1\right)  ^{i+1}A_{n}\text{, }B=\left(
\begin{array}
[c]{c}%
0\\
1
\end{array}
\right)  ,\text{ for }i\in\left\{  1,2\right\}  \text{ and }n\geq1.
\]
We also drop the dependency on $n$ in $A$ and $C$. Then, $Rank\left[  B\text{
}AB\right]  =Rank\left[
\begin{array}
[c]{cc}%
0 & 1\\
1 & 0
\end{array}
\right]  =2.$ However, by taking the initial condition $x=\left(
\begin{array}
[c]{c}%
1\\
0
\end{array}
\right)  ,$ one gets
\[
X_{2}^{x,u}=\left(
\begin{array}
[c]{c}%
\left(  1+\left\langle L_{2},e_{1}-e_{2}\right\rangle \right)  \left(
1+\left\langle L_{1},e_{1}-e_{2}\right\rangle +u_{1}\right) \\
u_{2}%
\end{array}
\right)  .
\]
As consequence, for any (predictable) choice of the control $u$,
\[
\mathbb{E}\left[  \left\vert X_{2}^{x,u}\right\vert ^{2}\right]  =\left(
u_{1}^{2}+\left(  2+u_{1}\right)  ^{2}\right)  +\mathbb{E}\left[  u_{2}%
^{2}\right]  \geq2.
\]
Therefore, independently of the predictable control we use, we are not able to
drive the process $X$ from $x$ to $0$ even though Kalman's condition is satisfied.
\end{example}

\subsection{A Stochastic Controllability Metric\label{Section2.4}}

In view of Criterion \ref{CritNullCtrl}, we associate, to every point $y$ in $%
\mathbb{R}
^{m}$ the controllability (pseudo)norm
\begin{equation}
\left\Vert y\right\Vert _{ctrl}^{2}:=\inf_{\left(  v\right)  _{1\leq n\leq
N}\text{ }\mathbb{F}\text{-predictable}}\mathbb{E}\left[  \sum_{n=0}%
^{N-1}\left\vert B^{T}\mathbb{E}\left[  y_{n+1}^{y_{0},v}/\mathcal{F}%
_{n}\right]  \right\vert ^{2}\right]  . \label{ctrl_norm}%
\end{equation}
The previous example shows that the associated metric is a stochastic one and,
in general, it does not reduce to the deterministic Gramian. Nevertheless, one
would very much like to have something which is close to the $p_{0}$ matrix in
(\ref{metricDet}). In this section, we thrive to provide a (pseudo)metric
which is more explicit than (\ref{ctrl_norm}).

To this purpose, let us analyze the following matrix scheme. We set, for
$\varepsilon>0,$ $P_{N}^{\varepsilon}=0\in%
\mathbb{R}
^{m\times m}$ and proceed by writing, for all $0\leq n\leq N-1,$%
\[
P_{n+1}^{\varepsilon}=\mathbb{E}\left[  P_{n+1}^{\varepsilon}/\mathcal{F}%
_{n}\right]  +Q_{n}^{\varepsilon}\text{ }diag\left(  \Delta M_{n+1}\right)  ,
\]
where, by convention,
\begin{equation}
diag\left(  \Delta M_{n+1}\right)  =\left(
\begin{array}
[c]{cccc}%
\Delta M_{n+1} & 0 & ... & 0\\
0 & \Delta M_{n+1} & ... & 0\\
... & ... & ... & \\
0 & 0 & ... & \Delta M_{n+1}%
\end{array}
\right)  \in%
\mathbb{R}
^{mp\times m}. \label{diagM}%
\end{equation}
The existence and uniqueness of such $Q_{n}^{\varepsilon}\in%
\mathbb{R}
^{m\times mp}$ is obtained by applying the martingale representation theorem
(see, for example \cite[Corollary 1]{Cohen_Elliott_SPA_2010} or
\cite[Corollary 3.1.1]{CohenElliot2011ProgrProb} and references therein) to
the columns of $P_{n+1}^{\varepsilon}.$ We proceed by setting
\begin{equation}
P_{n}^{\varepsilon}=A_{n}^{-1}\left(  \mathbb{E}\left[  P_{n+1}^{\varepsilon
}/\mathcal{F}_{n}\right]  +BB^{T}\right)  \left[  A_{n}^{T}\right]
^{-1}-\alpha_{n,\varepsilon}^{T}\eta_{n,\varepsilon}^{-1}\alpha_{n,\varepsilon
}, \label{RiccatiGeneralIt}%
\end{equation}
where $\alpha_{n,\varepsilon}=\left(
\begin{array}
[c]{c}%
\alpha_{n,\varepsilon}^{1}\\
...\\
\alpha_{n,\varepsilon}^{p}%
\end{array}
\right)  \in%
\mathbb{R}
^{mp\times m},$ $\eta_{n,\varepsilon}=\left(
\begin{array}
[c]{cccc}%
\eta_{n,\varepsilon}^{1,1} & \eta_{n,\varepsilon}^{1,2} & ... & \eta
_{n,\varepsilon}^{1,p}\\
\eta_{n,\varepsilon}^{2,1} & \eta_{n,\varepsilon}^{2,2} & ... & \eta
_{n,\varepsilon}^{2,p}\\
... & ... & ... & ...\\
\eta_{n,\varepsilon}^{p,1} & \eta_{n,\varepsilon}^{p,2} & ... & \eta
_{n,\varepsilon}^{p,p}%
\end{array}
\right)  \in%
\mathbb{R}
^{mp\times mp}$ are given by
\begin{align*}
&  \alpha_{n,\varepsilon}^{j}:=-Q_{n}^{\varepsilon}\mathbb{E}\left[
\left\langle \Delta M_{n+1},e_{j}\right\rangle diag\left(  \Delta
M_{n+1}\right)  /\mathcal{F}_{n}\right]  \left[  A_{n}^{T}\right]  ^{-1}\\
&  \text{ \ \ \ \ \ \ \ }+\sum_{1\leq i\leq p}\mathbb{E}\left[  \left\langle
\Delta M_{n+1},e_{j}\right\rangle \left\langle \Delta M_{n+1},e_{i}%
\right\rangle /\mathcal{F}_{n}\right]  C_{i,n}A_{n}^{-1}\left(  \mathbb{E}%
\left[  P_{n+1}^{\varepsilon}/\mathcal{F}_{n}\right]  +BB^{T}\right)  \left[
A_{n}^{T}\right]  ^{-1}\text{ and}%
\end{align*}%
\begin{align*}
&  \eta_{n,\varepsilon}^{j,k}:=\\
&  \varepsilon\delta_{j,k}I_{m\times m}+\frac{1}{2}Q_{n}^{\varepsilon
}\mathbb{E}\left[  \left\langle \Delta M_{n+1},e_{k}\right\rangle \left\langle
\Delta M_{n+1},e_{j}\right\rangle diag\left(  \Delta M_{n+1}\right)
/\mathcal{F}_{n}\right] \\
&  +\frac{1}{2}\mathbb{E}\left[  \left\langle \Delta M_{n+1},e_{k}%
\right\rangle \left\langle \Delta M_{n+1},e_{j}\right\rangle \left(
diag\left(  \Delta M_{n+1}\right)  \right)  ^{T}/\mathcal{F}_{n}\right]
\left(  Q_{n}^{\varepsilon}\right)  ^{T}\\
&  -\frac{1}{2}\sum_{1\leq i\leq p}Q_{n}^{\varepsilon}\mathbb{E}\left[
\left\langle \Delta M_{n+1},e_{i}\right\rangle \left\langle \Delta
M_{n+1},e_{j}\right\rangle /\mathcal{F}_{n}\right]  \mathbb{E}\left[
\left\langle \Delta M_{n+1},e_{k}\right\rangle diag\left(  \Delta
M_{n+1}\right)  /\mathcal{F}_{n}\right]  \left[  A_{n}^{T}\right]
^{-1}C_{i,n}^{T}\\
&  -\frac{1}{2}\sum_{1\leq i\leq p}C_{i,n}A_{n}^{-1}\mathbb{E}\left[
\left\langle \Delta M_{n+1},e_{i}\right\rangle \left\langle \Delta
M_{n+1},e_{k}\right\rangle /\mathcal{F}_{n}\right]  \mathbb{E}\left[
\left\langle \Delta M_{n+1},e_{j}\right\rangle \left(  diag\left(  \Delta
M_{n+1}\right)  \right)  ^{T}/\mathcal{F}_{n}\right]  \left(  Q_{n}%
^{\varepsilon}\right)  ^{T}\\
&  +\mathbb{E}\left[  \left\langle \Delta M_{n+1},e_{k}\right\rangle
\left\langle \Delta M_{n+1},e_{j}\right\rangle /\mathcal{F}_{n}\right]
\mathbb{E}\left[  P_{n+1}^{\varepsilon}/\mathcal{F}_{n}\right] \\
&  +\sum_{1\leq i,i^{\prime}\leq p}\left(  \left.
\begin{array}
[c]{c}%
\mathbb{E}\left[  \left\langle \Delta M_{n+1},e_{j}\right\rangle \left\langle
\Delta M_{n+1},e_{i}\right\rangle /\mathcal{F}_{n}\right]  \times
\mathbb{E}\left[  \left\langle \Delta M_{n+1},e_{k}\right\rangle \left\langle
\Delta M_{n+1},e_{i^{\prime}}\right\rangle /\mathcal{F}_{n}\right]  \times\\
\times C_{i^{\prime},n}A_{n}^{-1}\left(  \mathbb{E}\left[  P_{n+1}%
^{\varepsilon}/\mathcal{F}_{n}\right]  +BB^{T}\right)  \left[  A_{n}%
^{T}\right]  ^{-1}C_{i,n}^{T}%
\end{array}
\right.  \right)  ,
\end{align*}
for all $1\leq j,k\leq p.$ Here, $\delta_{j,k}$ stands for the classical
Kronecker delta ($1,$ if $j=k$ and $0,$ otherwise). While it is clear that
$\eta_{n,\varepsilon}$ is symmetric , it is (a lot) less obvious to ask for
$\eta_{n,\varepsilon}$ to be positive. We will show in some particular cases
that this stochastic Riccati-type difference equation is solvable and provide
explicit construction for $P$ and $Q$. For the time being, let us assume that,
for every $\varepsilon>0,$ such a solution exists. The second main result of
our paper is the following characterization of the null-controllability.

\begin{theorem}
\label{TheoremRiccati}i. The system (\ref{MDP2}) is (approximately)
null-controllable if and only if
\[
\underset{\varepsilon\rightarrow0+}{\lim\inf}P_{0}^{\varepsilon}\text{ is a
positive-definite, symmetric matrix.}%
\]

ii. The controllability (pseudo)norm given by (\ref{ctrl_norm}) satisfies
\[
\left\Vert y_{0}\right\Vert _{ctrl}^{2}=\underset{\varepsilon\rightarrow
0}{\lim\inf}\left\langle P_{0}^{\varepsilon}y_{0},y_{0}\right\rangle .
\]

\end{theorem}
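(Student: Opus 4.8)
The plan is to realise $\langle P_{0}^{\varepsilon}y_{0},y_{0}\rangle$ as the value of a regularised linear--quadratic control problem attached to the dual forward process (\ref{MDPdual}), in which the predictable $\mathbb{R}^{m\times p}$-valued sequence $v$ plays the role of control. For $\varepsilon>0$ set
\[
J^{\varepsilon}(y_{0},v):=\mathbb{E}\left[  \sum_{n=0}^{N-1}\left\vert B^{T}\mathbb{E}\left[  y_{n+1}^{y_{0},v}/\mathcal{F}_{n}\right]  \right\vert ^{2}+\varepsilon\sum_{n=1}^{N}\left\vert v_{n}\right\vert ^{2}\right]  ,
\]
so that the functional in (\ref{ctrl_norm}) is $J^{0}(y_{0},v)$. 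The central claim, from which both assertions follow, is the dynamic-programming identity $\inf_{v}J^{\varepsilon}(y_{0},v)=\langle P_{0}^{\varepsilon}y_{0},y_{0}\rangle$, proven by backward induction with the value-function ansatz $V_{n}(y):=\langle P_{n}^{\varepsilon}y,y\rangle$ (here $\mathcal{F}_{0}$ is taken trivial, so that $P_{0}^{\varepsilon}$ is a deterministic matrix). The induction starts from $P_{N}^{\varepsilon}=0$, i.e. $V_{N}\equiv0$, and each $P_{n}^{\varepsilon}$ is symmetric by induction, since $\eta_{n,\varepsilon}$ is symmetric (whence $\alpha_{n,\varepsilon}^{T}\eta_{n,\varepsilon}^{-1}\alpha_{n,\varepsilon}$ is symmetric) and the conjugation $A_{n}^{-1}(\cdot)[A_{n}^{T}]^{-1}$ preserves symmetry.

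For the induction step I would fix $n$ and, for an arbitrary $\mathcal{F}_{n}$-measurable $v_{n+1}$, expand
\[
G_{n}(v_{n+1}):=\mathbb{E}\left[  \langle P_{n+1}^{\varepsilon}y_{n+1},y_{n+1}\rangle/\mathcal{F}_{n}\right]  +\left\vert B^{T}\mathbb{E}\left[  y_{n+1}/\mathcal{F}_{n}\right]  \right\vert ^{2}+\varepsilon\left\vert v_{n+1}\right\vert ^{2},
\]
using the two decompositions $y_{n+1}=\mathbb{E}[y_{n+1}/\mathcal{F}_{n}]+v_{n+1}\Delta M_{n+1}$ and $P_{n+1}^{\varepsilon}=\mathbb{E}[P_{n+1}^{\varepsilon}/\mathcal{F}_{n}]+Q_{n}^{\varepsilon}\,diag(\Delta M_{n+1})$, together with the expression for $\mathbb{E}[y_{n+1}/\mathcal{F}_{n}]$ read off from (\ref{MDPdual}). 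All odd conditional moments of $\Delta M_{n+1}$ vanish by the martingale property, and what remains is a quadratic in $v_{n+1}$ (viewed as a vector of $\mathbb{R}^{mp}$),
\[
G_{n}(v_{n+1})=\langle\eta_{n,\varepsilon}v_{n+1},v_{n+1}\rangle+2\langle\alpha_{n,\varepsilon}y_{n},v_{n+1}\rangle+\left\langle A_{n}^{-1}\left(  \mathbb{E}\left[  P_{n+1}^{\varepsilon}/\mathcal{F}_{n}\right]  +BB^{T}\right)  \left[  A_{n}^{T}\right]  ^{-1}y_{n},y_{n}\right\rangle .
\]
The pure $v$-coefficient is exactly the symmetrised matrix $\eta_{n,\varepsilon}$ (the factors $\tfrac12$ and the transposed pairs in its definition are precisely the symmetrisation of the a priori non-symmetric bilinear contributions from the $Q_{n}^{\varepsilon}\,diag(\Delta M_{n+1})$ and $C_{i,n}$ terms), and the cross coefficient is $\alpha_{n,\varepsilon}$. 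Since $\eta_{n,\varepsilon}$ is positive-definite under the standing solvability assumption, completing the square gives the minimiser $v_{n+1}^{\ast}=-\eta_{n,\varepsilon}^{-1}\alpha_{n,\varepsilon}y_{n}$ and $\min_{v_{n+1}}G_{n}(v_{n+1})=\langle P_{n}^{\varepsilon}y_{n},y_{n}\rangle$ with $P_{n}^{\varepsilon}$ given by (\ref{RiccatiGeneralIt}). Telescoping the resulting one-step inequality $\langle P_{n}^{\varepsilon}y_{n},y_{n}\rangle\leq G_{n}(v_{n+1})$ over $n$ and taking expectations, using $P_{N}^{\varepsilon}=0$, yields $\langle P_{0}^{\varepsilon}y_{0},y_{0}\rangle\leq J^{\varepsilon}(y_{0},v)$, with equality along the feedback $v^{\ast}$. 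The main obstacle here is purely computational: checking that the term-by-term expansion reproduces the exact expressions for $\alpha_{n,\varepsilon}$ and $\eta_{n,\varepsilon}$, including the correct symmetrisation.

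Assertion ii then follows by letting $\varepsilon\to0$. For fixed $v$ the map $\varepsilon\mapsto J^{\varepsilon}(y_{0},v)$ is nondecreasing, hence so is $\varepsilon\mapsto\inf_{v}J^{\varepsilon}(y_{0},v)=\langle P_{0}^{\varepsilon}y_{0},y_{0}\rangle$, and the $\lim\inf$ is a genuine decreasing limit. On one hand $\inf_{v}J^{\varepsilon}\geq\inf_{v}J^{0}=\left\Vert y_{0}\right\Vert _{ctrl}^{2}$; on the other, for each fixed square-integrable $v$ one has $\lim\inf_{\varepsilon\to0}\inf_{w}J^{\varepsilon}(y_{0},w)\leq\lim_{\varepsilon\to0}J^{\varepsilon}(y_{0},v)=J^{0}(y_{0},v)$, and taking the infimum over $v$ gives the reverse inequality. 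Thus $\lim\inf_{\varepsilon\to0}\langle P_{0}^{\varepsilon}y_{0},y_{0}\rangle=\left\Vert y_{0}\right\Vert _{ctrl}^{2}$, which is assertion ii. Since the left-hand side is a pointwise decreasing limit of the symmetric quadratic forms $y_{0}\mapsto\langle P_{0}^{\varepsilon}y_{0},y_{0}\rangle$ and is everywhere finite, by polarisation it is itself a quadratic form; as symmetric matrices on the finite-dimensional space $\mathbb{R}^{m}$ are determined by their quadratic forms, the limit $P_{0}:=\lim_{\varepsilon\to0}P_{0}^{\varepsilon}=\lim\inf_{\varepsilon\to0}P_{0}^{\varepsilon}$ exists, is symmetric, and satisfies $\left\Vert y_{0}\right\Vert _{ctrl}^{2}=\langle P_{0}y_{0},y_{0}\rangle$.

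Finally, assertion i follows by combining this with Criterion \ref{CritNullCtrl}: the system (\ref{MDP2}) is (approximately and exactly) null-controllable if and only if there is $k>0$ with $\left\vert y_{0}\right\vert ^{2}\leq k\left\Vert y_{0}\right\Vert _{ctrl}^{2}=k\langle P_{0}y_{0},y_{0}\rangle$ for every $y_{0}\in\mathbb{R}^{m}$. In the finite-dimensional setting this coercivity of the symmetric form $\langle P_{0}\cdot,\cdot\rangle$ is equivalent to $P_{0}$ being positive-definite (necessity via the smallest eigenvalue, sufficiency being immediate), i.e. to $\lim\inf_{\varepsilon\to0}P_{0}^{\varepsilon}$ being a positive-definite, symmetric matrix, which is exactly assertion i.
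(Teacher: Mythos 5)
Your proposal is correct and follows essentially the same route as the paper: both establish the value identity $\langle P_{0}^{\varepsilon}y_{0},y_{0}\rangle=\inf_{v}J^{\varepsilon}(y_{0},v)$ by a backward completed-square/telescoping argument (the paper's identity (\ref{optimalityPeps})), then conclude via Criterion \ref{CritNullCtrl} and the limit $\varepsilon\rightarrow0$, with both texts deferring the same term-by-term verification that the expansion reproduces $\alpha_{n,\varepsilon}$ and $\eta_{n,\varepsilon}$. The only discrepancy is the sign of your cross term (the paper's square is $\vert\eta_{n,\varepsilon}^{-1/2}\alpha_{n,\varepsilon}y_{n}-\eta_{n,\varepsilon}^{1/2}v_{n+1}\vert^{2}$, giving the feedback $v^{opt}=\eta_{n,\varepsilon}^{-1}\alpha_{n,\varepsilon}y_{n}$ rather than its negative), which does not affect the minimal value or either assertion; your added monotonicity-in-$\varepsilon$ and polarisation remarks are correct refinements the paper leaves implicit.
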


The proof is postponed to Section \ref{Section4}. The construction of
$P^{\varepsilon}$ is tailor-made to infer a recurrence on the terms
$\left\langle P_{n}^{\varepsilon}y_{n}^{y_{0},v},y_{n}^{y_{0},v}\right\rangle
$. To conclude, one applies Criterion \ref{CritNullCtrl}.

\begin{remark}
\label{remOptimality}i. This result is the discrete-time version of
\cite[Theorem 3.4]{Sarbu_Tessitore_2001}.

ii. A simple look at the proof (see (\ref{optimalityPeps})) shows that
\[
\left\langle P_{0}^{\varepsilon}y_{0},y_{0}\right\rangle =\inf_{\left(
v_{n}\right)  _{1\leq n\leq N}\text{ }\mathbb{F}\text{-predictable}}\left(
\varepsilon\sum_{n=0}^{N-1}\mathbb{E}\left[  \left\vert v_{n+1}\right\vert
^{2}\right]  +\mathbb{E}\left[  \sum_{n=0}^{N-1}\left\vert B^{T}%
\mathbb{E}\left[  y_{n+1}^{y_{0},v}/\mathcal{F}_{n}\right]  \right\vert
^{2}\right]  \right)
\]
and the optimal control realizing this minimum is given in feedback form by
\[
v_{n+1}^{opt}=\eta_{n,\varepsilon}^{-1}\alpha_{n,\varepsilon}y_{n}%
^{y_{0},v^{opt}}.
\]
Nevertheless, due to the structure of $\alpha,$ $y_{n}^{y_{0},v^{opt}}$ might
not be a Markov process$.$
\end{remark}

\subsection{Solvability of the Backward Stochastic Riccati Difference Scheme
(BSRDS)\label{Section2.5}}

The aim of this subsection is to provide two simple cases in which the
backward stochastic Riccati scheme admits explicit solutions. One of the
simplest frameworks for our trend component is the one in which the martingale
is generated by independent and identically distributed variables. In other
words, we assume $L_{n+1}$ to be independent of $\mathcal{F}_{n}$ for all
$n\geq0$ and $L_{n}$ has the same law as $L_{0}.$ Then $\left(  \left\langle
L_{n},e_{i}\right\rangle \right)  _{n\geq1}$ are independent Bernoulli
distributed with some parameter $q_{i}>0$ (independent of $n$) and such that
$\sum_{i=1}^{p}q_{i}=1.$

The first setting is when $A$ and $C$ consist of sequences of non-random
matrices. In other words, the randomness may only come from the martingale
induced by the trend component $L$. In this case, we get the following result
of solvability of the BSRDS.

\begin{proposition}
[non-random coefficients case]\label{PropBSRDSNonRandom}We assume that $L_{n}$
are independent, identically distributed random variables on $\left\{
e_{1},e_{2},...,e_{p}\right\}  $ and denote by
\[
q_{i}=\mathbb{P}\left(  L_{1}=e_{i}\right)  >0,\text{ for every }1\leq i\leq
p.
\]
Moreover, we assume that
\[
A_{n}\text{, }C_{n}\in%
\mathbb{R}
^{m\times m},\text{ for all }n\geq0
\]
are sequences of (non-random) matrices. Then, for every $\varepsilon>0,$ the
BSRDS (\ref{RiccatiGeneralIt}) admits a (unique) solution given by a
positive-semidefinite sequence $\left(  P_{n}^{\varepsilon}\right)  _{0\leq
n\leq N}$ and $Q^{\varepsilon}=0.$ This solution is given by
\begin{equation}
\left\{
\begin{array}
[c]{l}%
P_{N}^{\varepsilon}=0,\\
\alpha_{n,\varepsilon}=\mathcal{C}_{n}A_{n}^{-1}\left(  P_{n+1}^{\varepsilon
}+BB^{T}\right)  \left[  A_{n}^{T}\right]  ^{-1},\text{ }\\
\eta_{n,\varepsilon}=\varepsilon I_{mp\times mp}+\left(  q_{j}\left(
\delta_{j,k}-q_{k}\right)  P_{n+1}^{\varepsilon}\right)  _{1\leq j,k\leq
p}+\mathcal{C}_{n}A_{n}^{-1}\left(  P_{n+1}^{\varepsilon}\mathcal{+}%
BB^{T}\right)  \left[  \mathcal{C}_{n}A_{n}^{-1}\right]  ^{T},\\
P_{n}^{\varepsilon}=A_{n}^{-1}\left(  P_{n+1}^{\varepsilon}+BB^{T}\right)
\left[  A_{n}^{T}\right]  ^{-1}-\alpha_{n,\varepsilon}^{T}\eta_{n,\varepsilon
}^{-1}\alpha_{n,\varepsilon},
\end{array}
\right.  \label{RiccatiNonRandomCoeff}%
\end{equation}
where $\mathcal{C}_{n}:\mathcal{=}\left[
\begin{array}
[c]{c}%
\sum_{l=1}^{p}\left(  \delta_{1,l}-q_{1}\right)  q_{l}C_{l,n}\\
\sum_{l=1}^{p}\left(  \delta_{2,l}-q_{2}\right)  q_{l}C_{l,n}\\
...\\
\sum_{l=1}^{p}\left(  \delta_{p,l}-q_{p}\right)  q_{l}C_{l,n}%
\end{array}
\right]  .$
\end{proposition}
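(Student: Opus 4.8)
The plan is to argue by backward induction on $n$, running from $n=N$ down to $n=0$, the joint statement that $P_{n}^{\varepsilon}$ is a non-random positive semi-definite matrix and that the representation term $Q_{n}^{\varepsilon}$ vanishes. The base case $P_{N}^{\varepsilon}=0$ is trivially non-random and positive semi-definite. The structural heart of the induction is that non-random coefficients force $Q^{\varepsilon}=0$: if $P_{n+1}^{\varepsilon}$ is non-random, then $\mathbb{E}\left[P_{n+1}^{\varepsilon}/\mathcal{F}_{n}\right]=P_{n+1}^{\varepsilon}$, and the defining identity $P_{n+1}^{\varepsilon}=\mathbb{E}\left[P_{n+1}^{\varepsilon}/\mathcal{F}_{n}\right]+Q_{n}^{\varepsilon}\,diag(\Delta M_{n+1})$ exhibits the constant $P_{n+1}^{\varepsilon}$ as its own conditional mean with zero martingale increment. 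The uniqueness (up to equivalence) of the martingale representation, valid here because each $q_{i}>0$, then yields $Q_{n}^{\varepsilon}=0$. Since $A_{n},C_{i,n},B$ and the $q_{i}$ are all non-random, the matrix $P_{n}^{\varepsilon}$ produced by (\ref{RiccatiGeneralIt}) is again non-random, so the induction is self-consistent at the level of measurability.

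With $Q_{n}^{\varepsilon}=0$ in force, the next step is to reduce the lengthy expressions for $\alpha_{n,\varepsilon}$ and $\eta_{n,\varepsilon}$ to the compact form (\ref{RiccatiNonRandomCoeff}). I would first record the i.i.d. conditional moments $\mathbb{E}\left[\langle\Delta M_{n+1},e_{j}\rangle/\mathcal{F}_{n}\right]=0$ and $\mathbb{E}\left[\langle\Delta M_{n+1},e_{j}\rangle\langle\Delta M_{n+1},e_{k}\rangle/\mathcal{F}_{n}\right]=q_{j}(\delta_{j,k}-q_{k})$, which follow from $\langle L_{n+1},e_{j}\rangle\langle L_{n+1},e_{k}\rangle=\delta_{j,k}\langle L_{n+1},e_{j}\rangle$ and the independence of $L_{n+1}$ from $\mathcal{F}_{n}$; note that only this second moment is needed, since every surviving term is either linear in $P_{n+1}^{\varepsilon}$ or a product of two such pairwise moments. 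Setting $Q_{n}^{\varepsilon}=0$ kills every term in $\alpha_{n,\varepsilon}$ and $\eta_{n,\varepsilon}$ carrying a factor $Q_{n}^{\varepsilon}$; substituting the second moment into what remains gives the $j$-th block of $\alpha_{n,\varepsilon}$ as $\sum_{i}q_{j}(\delta_{j,i}-q_{i})C_{i,n}A_{n}^{-1}(P_{n+1}^{\varepsilon}+BB^{T})[A_{n}^{T}]^{-1}$, and the elementary identity $\sum_{i}q_{j}(\delta_{j,i}-q_{i})C_{i,n}=\sum_{l}(\delta_{j,l}-q_{j})q_{l}C_{l,n}$ identifies this with the $j$-th block of $\mathcal{C}_{n}A_{n}^{-1}(P_{n+1}^{\varepsilon}+BB^{T})[A_{n}^{T}]^{-1}$. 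The same moment turns the two surviving contributions to $\eta_{n,\varepsilon}$ into the block matrix $\left(q_{j}(\delta_{j,k}-q_{k})P_{n+1}^{\varepsilon}\right)_{j,k}$ and into $\mathcal{C}_{n}A_{n}^{-1}(P_{n+1}^{\varepsilon}+BB^{T})[\mathcal{C}_{n}A_{n}^{-1}]^{T}$; the only mild care here is that the double-sum term lands in block-transposed arrangement, which coincides with the target precisely because that term has the symmetric shape $GRG^{T}$ with $R=P_{n+1}^{\varepsilon}+BB^{T}$.

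It remains to show that the recursion is genuinely solvable and preserves positivity. Invertibility of $\eta_{n,\varepsilon}$ is the easy half: in the form $\eta_{n,\varepsilon}=\varepsilon I_{mp\times mp}+\left(q_{j}(\delta_{j,k}-q_{k})P_{n+1}^{\varepsilon}\right)_{j,k}+\mathcal{C}_{n}A_{n}^{-1}(P_{n+1}^{\varepsilon}+BB^{T})[\mathcal{C}_{n}A_{n}^{-1}]^{T}$, the middle block is the Kronecker-type product of the covariance matrix $\left(q_{j}(\delta_{j,k}-q_{k})\right)_{j,k}$ of $L_{1}$ (positive semi-definite) with the positive semi-definite $P_{n+1}^{\varepsilon}$, and the last block is manifestly positive semi-definite, whence $\eta_{n,\varepsilon}\succeq\varepsilon I$ is positive definite and invertible; this is exactly where the regularizer $\varepsilon>0$ earns its keep. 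For positivity of $P_{n}^{\varepsilon}$, I would set $R:=P_{n+1}^{\varepsilon}+BB^{T}$, $M:=A_{n}^{-1}R[A_{n}^{T}]^{-1}$ and $T:=\mathcal{C}_{n}M^{1/2}$, so that $\alpha_{n,\varepsilon}=\mathcal{C}_{n}M$, $\eta_{n,\varepsilon}=\Lambda+TT^{T}$ with $\Lambda:=\varepsilon I+\left(q_{j}(\delta_{j,k}-q_{k})P_{n+1}^{\varepsilon}\right)_{j,k}$ positive definite, and
\[
P_{n}^{\varepsilon}=M-M\mathcal{C}_{n}^{T}\eta_{n,\varepsilon}^{-1}\mathcal{C}_{n}M=M^{1/2}\left(I_{m\times m}-T^{T}(\Lambda+TT^{T})^{-1}T\right)M^{1/2}.
\]
A Woodbury/push-through identity gives $I_{m\times m}-T^{T}(\Lambda+TT^{T})^{-1}T=(I_{m\times m}+T^{T}\Lambda^{-1}T)^{-1}$, which is positive semi-definite because $\Lambda$ is positive definite; conjugating by $M^{1/2}$ preserves this, so $P_{n}^{\varepsilon}\succeq 0$, closing the induction.

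Uniqueness then follows from the uniqueness of the martingale representation (which forces $Q^{\varepsilon}=0$) together with the fact that, once $Q^{\varepsilon}=0$, the recursion (\ref{RiccatiNonRandomCoeff}) determines $P_{n}^{\varepsilon}$ unambiguously at each backward step. I expect the main obstacle not to be invertibility of $\eta_{n,\varepsilon}$ (which is immediate once $\varepsilon>0$), but rather the positivity of $P_{n}^{\varepsilon}$: it is the one genuinely non-algebraic-bookkeeping inequality, and it rests on recognizing the Schur-complement structure and applying the Woodbury identity. The reduction of the general $\alpha_{n,\varepsilon},\eta_{n,\varepsilon}$ to (\ref{RiccatiNonRandomCoeff}) is laborious but entirely routine given the single second-moment identity above.
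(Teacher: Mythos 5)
Your proposal is correct and follows the same overall strategy as the paper's proof: backward induction in which the non-randomness of $P_{n+1}^{\varepsilon}$ forces $Q_{n}^{\varepsilon}=0$ by uniqueness of the martingale representation, then positive definiteness of $\eta_{n,\varepsilon}$ from the positive semi-definiteness of the block $\left(q_{j}\left(\delta_{j,k}-q_{k}\right)P_{n+1}^{\varepsilon}\right)_{1\leq j,k\leq p}$ (the paper realizes this as $\mathcal{D}\,\mathrm{diag}\left(P_{n+1}^{\varepsilon},...,P_{n+1}^{\varepsilon}\right)\mathcal{D}^{T}$ with $\mathcal{D}$ built from a Cholesky factor of the covariance matrix of $L_{1}$ --- the same fact as your Kronecker-product observation), and finally positivity of $P_{n}^{\varepsilon}$. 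The only step where you genuinely diverge is the last one. The paper uses a completion-of-squares (``feedback'') identity, rewriting $A_{n}^{-1}\left(P_{n+1}^{\varepsilon}+BB^{T}\right)\left[A_{n}^{T}\right]^{-1}-\alpha_{n,\varepsilon}^{T}\eta_{n,\varepsilon}^{-1}\alpha_{n,\varepsilon}$ as the sum of a term of the form $G\left(P_{n+1}^{\varepsilon}+BB^{T}\right)G^{T}$ and the term $\alpha_{n,\varepsilon}^{T}\eta_{n,\varepsilon}^{-1}\left(\varepsilon I+\Delta_{n+1}^{\varepsilon}\right)\eta_{n,\varepsilon}^{-1}\alpha_{n,\varepsilon}$, both manifestly positive semi-definite; you instead factor out $M^{1/2}$ and invoke the push-through identity $I-T^{T}\left(\Lambda+TT^{T}\right)^{-1}T=\left(I+T^{T}\Lambda^{-1}T\right)^{-1}$. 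Both arguments are valid; yours has the minor bonus of exhibiting $P_{n}^{\varepsilon}=M^{1/2}\left(I+T^{T}\Lambda^{-1}T\right)^{-1}M^{1/2}$ explicitly (hence positive definiteness whenever $M$ is invertible), while the paper's decomposition is the one that mirrors the optimal feedback control of Remark \ref{remOptimality}. Your careful verification that the general $\alpha_{n,\varepsilon},\eta_{n,\varepsilon}$ collapse to (\ref{RiccatiNonRandomCoeff}) --- including the remark that the double-sum block lands transposed and is reconciled by the symmetry of $\mathcal{C}_{n}A_{n}^{-1}\left(P_{n+1}^{\varepsilon}+BB^{T}\right)\left[\mathcal{C}_{n}A_{n}^{-1}\right]^{T}$ --- is actually more detailed than the paper, which simply asserts this reduction.
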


The proof follows by (descending) induction and is postponed to Section
\ref{Section4}.

\begin{remark}
\label{RemOpenLoop}i. We emphasize that we deal here with a difference
equation and not with an algebraic Riccati equation and this is why one does
not need further conditions on the Popov matrix.

ii. The Riccati difference equation given by (\ref{RiccatiNonRandomCoeff}) is
obviously deterministic. Then, a glance at the optimal control in Remark
\ref{remOptimality} shows that $v_{n+1}^{opt}=\eta_{n,\varepsilon}^{-1}%
\delta_{n,\varepsilon}y_{n}^{y_{0},v^{opt}}$ is not only predictable but a
deterministic function of time $n$ and the state of the process $y_{n}$.
Therefore, the infimum in $\left\langle P_{0}^{\varepsilon}y_{0}%
,y_{0}\right\rangle $ can be taken over open-loop control strategies. Of
course, similar assertions hold true for the controllability (pseudo)norm. As
a by-product the process $y$ constructed with open-loop controls is Markovian.
\end{remark}

The second case in which solving the backward stochastic Riccati difference
equation is reduced to deterministic arguments is when $C=0.$ Under this
assumption, the BSRDS becomes%

\begin{equation}
\left\{
\begin{array}
[c]{l}%
P_{n}^{\varepsilon}:=A_{n}^{-1}\left(  \mathbb{E}\left[  P_{n+1}^{\varepsilon
}/\mathcal{F}_{n}\right]  +BB^{T}\right)  \left[  A_{n}^{T}\right]
^{-1}-\alpha_{n,\varepsilon}^{T}\eta_{n,\varepsilon}^{-1}\alpha_{n,\varepsilon
},\\
\alpha_{n,\varepsilon}^{j}:=-q_{j}Q_{n}^{\varepsilon}diag\left(
{\textstyle\sum\limits_{k=1}^{p}}
\left(  \delta_{j,k}-q_{k}\right)  e_{k}\right)  \left[  A_{n}^{T}\right]
^{-1},\\
\eta_{n,\varepsilon}^{j,k}:=\varepsilon\delta_{j,k}I_{m\times m}+\left(
\delta_{j,k}-q_{k}\right)  q_{j}\mathbb{E}\left[  P_{n+1}^{\varepsilon
}/\mathcal{F}_{n}\right]  +\frac{1}{2}Q_{n}^{\varepsilon}diag\left(
{\textstyle\sum\limits_{l=1}^{p}}
m_{j,k,l}e_{l}\right) \\
+\frac{1}{2}\left(  diag\left(
{\textstyle\sum\limits_{l=1}^{n}}
m_{j,k,l}e_{l}\right)  \right)  ^{T}\left(  Q_{n}^{\varepsilon}\right)  ^{T}.
\end{array}
\right.  \label{RiccatiC=0}%
\end{equation}
for all $1\leq j,k\leq p.$ Here,%
\[
m_{j,k,l}=q_{l}\left(  q_{j}-\delta_{j,l}\right)  \left(  q_{k}-\delta
_{k,l}\right)  -q_{l}\left(  \delta_{j,k}-q_{j}\right)  q_{k}.
\]
Let us recall that the $diag$ matrices are of type $%
\mathbb{R}
^{mp\times m}$ and are constructed as in (\ref{diagM}).

When $A_{n}=A\left(  n,L_{n}\right)  ,$ for all $0\leq n\leq N$, where $A$ is
some $%
\mathbb{R}
^{m\times m}$-valued deterministic function of time and trend, we set
\begin{equation}
p_{N}^{\varepsilon}=0_{m\times m},\text{ }q_{N}^{\varepsilon}=0_{m\times m}
\label{pqN}%
\end{equation}
and construct, for, $n<N-1,$
\begin{equation}
\left\{
\begin{array}
[c]{l}%
p_{n+1}^{\varepsilon}:=%
{\textstyle\sum\limits_{l=1}^{p}}
q_{l}A^{-1}\left(  n+1,e_{l}\right)  \left(  p_{n+2}^{\varepsilon}%
+BB^{T}-q_{n+2}^{\varepsilon}\right)  \left(  A^{-1}\left(  n+1,e_{l}\right)
\right)  ^{T}\text{,}\\
q_{n+1}^{\varepsilon}=\overline{\alpha}_{n,\varepsilon}^{T}\eta_{n,\varepsilon
}^{-1}\overline{\alpha}_{n,\varepsilon},
\end{array}
\right.  \label{pqn+1}%
\end{equation}
where
\begin{equation}
\left\{
\begin{array}
[c]{l}%
\overline{\alpha}_{n,\varepsilon}^{j}=\left[
{\textstyle\sum\limits_{l=1}^{p}}
q_{l}\left(  \delta_{j,l}-q_{j}\right)  A^{-1}\left(  n+1,e_{l}\right)
\left(  p_{n+2}^{\varepsilon}+BB^{T}-q_{n+2}^{\varepsilon}\right)  \left(
A^{-1}\left(  n+1,e_{l}\right)  \right)  ^{T}\right]  ,\\
\alpha_{n,\varepsilon}^{j}=-\overline{\alpha}_{n,\varepsilon}^{j}\left[
A_{n}^{T}\right]  ^{-1}.\\
\eta_{n,\varepsilon}^{j,k}=\varepsilon\delta_{j,k}I_{m\times m}\\
\text{ \ \ \ \ \ }+%
{\textstyle\sum\limits_{l=1}^{p}}
q_{l}\left(  q_{j}-\delta_{j,l}\right)  \left(  q_{k}-\delta_{k,l}\right)
A^{-1}\left(  n+1,e_{l}\right)  \left(  p_{n+2}^{\varepsilon}+BB^{T}%
-q_{n+2}^{\varepsilon}\right)  \left(  A^{-1}\left(  n+1,e_{l}\right)
\right)  ^{T},
\end{array}
\right.  \label{alphaetaC=0}%
\end{equation}
for all $1\leq j,k\leq p$.

The main result in this framework gives the solvability of the BSRDS
(\ref{RiccatiC=0}).

\begin{proposition}
[the case without multiplicative noise]\label{PropBSRDSC=0}We assume that
$L_{n}$ are independent, identically distributed random variables on $\left\{
e_{1},e_{2},...,e_{p}\right\}  $ and denote by
\[
q_{i}=\mathbb{P}\left(  L_{1}=e_{i}\right)  >0,\text{ for every }1\leq i\leq
p.
\]
Moreover, we assume that $A_{n}=A\left(  n,L_{n}\right)  $ where $A$ is some $%
\mathbb{R}
^{m\times m}$-valued deterministic function of time and trend. Then, for every
$\varepsilon>0,$ the following assertions hold true :

i. The matrices $\left(  p_{n}^{\varepsilon}\right)  _{0\leq n\leq N}$ and
$\left(  q_{n}^{\varepsilon}\right)  _{0\leq n\leq N}$ $\ $given by
(\ref{pqN}, \ref{pqn+1}) are positive-semidefinite and
\begin{equation}
p_{n}^{\varepsilon}\geq q_{n}^{\varepsilon},\text{ for all }0\leq n\leq N
\label{pngeqn}%
\end{equation}

ii. The solution of (\ref{RiccatiC=0}) is given by the couple $\left(
P^{\varepsilon},Q\right)  \in%
\mathbb{R}
^{m\times m}\times%
\mathbb{R}
^{m\times mp}$ defined by $P_{N}^{\varepsilon}=0$ and $Q_{N-1}=0$
\begin{equation}
P_{n}^{\varepsilon}=A^{-1}\left(  n,L_{n}\right)  \left(  p_{n+1}%
^{\varepsilon}+BB^{T}-q_{n+1}^{\varepsilon}\right)  \left(  A^{-1}\left(
n,L_{n}\right)  \right)  ^{T},\text{ for all }0\leq n\leq N-1
\label{Pn=f(pn,qn)}%
\end{equation}
and%
\begin{align}
&  Q_{n-1}=\left[  Q_{n-1,1}^{\cdot,\cdot},Q_{n-1,2}^{\cdot,\cdot
},...,Q_{n-1,m}^{\cdot,\cdot}\right]  \in%
\mathbb{R}
^{m\times p}\times%
\mathbb{R}
^{m\times p}\times...\times%
\mathbb{R}
^{m\times p},\text{ where}\label{Qn=f(pn,qn)}\\
&  Q_{n-1,j}^{i,l}=\left[  A^{-1}\left(  n,e_{l}\right)  \left(
p_{n+1}^{\varepsilon}+BB^{T}-q_{n+1}^{\varepsilon}\right)  \left(
A^{-1}\left(  n,e_{l}\right)  \right)  ^{T}\right]  _{i,j},\nonumber
\end{align}
for all $1\leq i,j\leq m,$ $1\leq l\leq p,$ $1\leq n\leq N-1$.
\end{proposition}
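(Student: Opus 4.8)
The plan is to run a single descending induction on $n$ (from the terminal $n=N$ downwards) that simultaneously establishes (a) that the deterministic matrices $p_{n}^{\varepsilon},q_{n}^{\varepsilon}$ of (\ref{pqN})--(\ref{pqn+1}) are well defined, positive-semidefinite and satisfy (\ref{pngeqn}), and (b) that the couple $\left(  P^{\varepsilon},Q\right)  $ of (\ref{Pn=f(pn,qn)})--(\ref{Qn=f(pn,qn)}) solves the scheme (\ref{RiccatiC=0}). The decisive simplification from the i.i.d. assumption is that $\Delta M_{n+1}=L_{n+1}-\sum_{i=1}^{p}q_{i}e_{i}$ is independent of $\mathcal{F}_{n}$, so every conditional expectation against $\mathcal{F}_{n}$ becomes a plain expectation, and the conditional second and third moments of the coordinates $\left\langle \Delta M_{n+1},e_{j}\right\rangle =\mathbf{1}_{L_{n+1}=e_{j}}-q_{j}$ are the deterministic centered moments $\mathbb{E}\left[  \left\langle \Delta M_{n+1},e_{j}\right\rangle \left\langle \Delta M_{n+1},e_{k}\right\rangle \right]  =q_{j}\left(  \delta_{j,k}-q_{k}\right)  $ and $\mathbb{E}\left[  \left\langle \Delta M_{n+1},e_{j}\right\rangle \left\langle \Delta M_{n+1},e_{k}\right\rangle \left\langle \Delta M_{n+1},e_{l}\right\rangle \right]  =m_{j,k,l}$. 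Under the ansatz $P_{n+1}^{\varepsilon}=A^{-1}\left(  n+1,L_{n+1}\right)  \left(  p_{n+2}^{\varepsilon}+BB^{T}-q_{n+2}^{\varepsilon}\right)  \left(  A^{-1}\left(  n+1,L_{n+1}\right)  \right)  ^{T}$, this yields at once the deterministic identity $\mathbb{E}\left[  P_{n+1}^{\varepsilon}/\mathcal{F}_{n}\right]  =\mathbb{E}\left[  P_{n+1}^{\varepsilon}\right]  =p_{n+1}^{\varepsilon}$.

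For part (b), write $R_{n+1}\left(  e_{l}\right)  :=A^{-1}\left(  n+1,e_{l}\right)  \left(  p_{n+2}^{\varepsilon}+BB^{T}-q_{n+2}^{\varepsilon}\right)  \left(  A^{-1}\left(  n+1,e_{l}\right)  \right)  ^{T}$. Then $P_{n+1}^{\varepsilon}-p_{n+1}^{\varepsilon}=\sum_{l=1}^{p}\left\langle \Delta M_{n+1},e_{l}\right\rangle R_{n+1}\left(  e_{l}\right)  $, and comparing $\left(  i,j\right)  $-entries against the block form (\ref{diagM}) of $diag\left(  \Delta M_{n+1}\right)  $ identifies $Q_{n}^{\varepsilon}$ as in (\ref{Qn=f(pn,qn)}) (with $Q_{N-1}=0$, since $P_{N}^{\varepsilon}=0$ is deterministic). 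Feeding this $Q_{n}^{\varepsilon}$ and the moments above into the defining formulas for $\alpha_{n,\varepsilon},\eta_{n,\varepsilon}$ in (\ref{RiccatiC=0}) collapses them, after routine bookkeeping, to the closed forms (\ref{alphaetaC=0}). The final algebraic step is the observation $\alpha_{n,\varepsilon}^{j}=-\overline{\alpha}_{n,\varepsilon}^{j}\left[  A_{n}^{T}\right]  ^{-1}$, i.e. $\alpha_{n,\varepsilon}=-\overline{\alpha}_{n,\varepsilon}\left[  A_{n}^{T}\right]  ^{-1}$, whence $\alpha_{n,\varepsilon}^{T}\eta_{n,\varepsilon}^{-1}\alpha_{n,\varepsilon}=A^{-1}\left(  n,L_{n}\right)  \overline{\alpha}_{n,\varepsilon}^{T}\eta_{n,\varepsilon}^{-1}\overline{\alpha}_{n,\varepsilon}\left(  A^{-1}\left(  n,L_{n}\right)  \right)  ^{T}=A^{-1}\left(  n,L_{n}\right)  q_{n+1}^{\varepsilon}\left(  A^{-1}\left(  n,L_{n}\right)  \right)  ^{T}$. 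Substituting into (\ref{RiccatiC=0}) together with $\mathbb{E}\left[  P_{n+1}^{\varepsilon}/\mathcal{F}_{n}\right]  =p_{n+1}^{\varepsilon}$ returns exactly (\ref{Pn=f(pn,qn)}), closing the inductive step for (b).

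The heart of part (i) is the comparison $p_{n+1}^{\varepsilon}\geq q_{n+1}^{\varepsilon}$, which is also what keeps the recursion alive: it is precisely $p_{n+2}^{\varepsilon}\geq q_{n+2}^{\varepsilon}$ (with $BB^{T}\geq0$) that renders each $R_{n+1}\left(  e_{l}\right)  $ positive-semidefinite, hence $\eta_{n,\varepsilon}=\varepsilon I_{mp\times mp}+\Xi_{n}\geq\varepsilon I_{mp\times mp}\succ0$ invertible, $q_{n+1}^{\varepsilon}=\overline{\alpha}_{n,\varepsilon}^{T}\eta_{n,\varepsilon}^{-1}\overline{\alpha}_{n,\varepsilon}\geq0$ and $p_{n+1}^{\varepsilon}=\sum_{l}q_{l}R_{n+1}\left(  e_{l}\right)  \geq0$. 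I would prove $p_{n+1}^{\varepsilon}\geq q_{n+1}^{\varepsilon}$ by a completion-of-squares identity: for $y\in\mathbb{R}^{m}$ and $w=\left(  w_{1},...,w_{p}\right)  \in\mathbb{R}^{mp}$, setting $\psi_{l}:=w_{l}-\sum_{j=1}^{p}q_{j}w_{j}$,
\[
\begin{pmatrix} y \\ w \end{pmatrix}^{T}\begin{pmatrix} p_{n+1}^{\varepsilon} & \overline{\alpha}_{n,\varepsilon}^{T}\\ \overline{\alpha}_{n,\varepsilon} & \eta_{n,\varepsilon}\end{pmatrix}\begin{pmatrix} y \\ w \end{pmatrix}=\varepsilon\left\vert w\right\vert ^{2}+\sum_{l=1}^{p}q_{l}\left(  y+\psi_{l}\right)  ^{T}R_{n+1}\left(  e_{l}\right)  \left(  y+\psi_{l}\right)  .
\]
Since $R_{n+1}\left(  e_{l}\right)  \geq0$, $q_{l}>0$ and $\varepsilon>0$, the right-hand side is nonnegative, so the block matrix is positive-semidefinite; because $\eta_{n,\varepsilon}\succ0$, its Schur complement $p_{n+1}^{\varepsilon}-\overline{\alpha}_{n,\varepsilon}^{T}\eta_{n,\varepsilon}^{-1}\overline{\alpha}_{n,\varepsilon}=p_{n+1}^{\varepsilon}-q_{n+1}^{\varepsilon}$ is positive-semidefinite, which is (\ref{pngeqn}). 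The base case $p_{N}^{\varepsilon}=q_{N}^{\varepsilon}=0$, $P_{N}^{\varepsilon}=0$ is immediate.

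I expect the routine-but-lengthy portion to be the collapse of the general $\alpha_{n,\varepsilon},\eta_{n,\varepsilon}$ of (\ref{RiccatiC=0}) to the explicit (\ref{alphaetaC=0}) via the multinomial moment evaluations, while the martingale-representation step presents no difficulty, the existence of the decomposition being the finite-state result recalled after (\ref{diagM}). The genuinely delicate point, and the true crux of the argument, is the \emph{simultaneous} positivity together with the comparison $p_{n}^{\varepsilon}\geq q_{n}^{\varepsilon}$: it is what both guarantees invertibility of $\eta_{n,\varepsilon}$ at each step and sustains the descending induction, and it is exactly the content delivered by the completion-of-squares/Schur-complement identity above.
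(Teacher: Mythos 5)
Your proposal is correct and follows essentially the same route as the paper: a descending induction in which the i.i.d.\ assumption turns all conditional expectations into deterministic moments, the martingale representation identifies $Q_{n}$ explicitly, and the induction hypothesis $p_{n+2}^{\varepsilon}\geq q_{n+2}^{\varepsilon}$ keeps $\eta_{n,\varepsilon}\geq\varepsilon I\succ0$ and the recursion alive. The only (cosmetic) difference is in the final inequality: the paper factors $p_{n+1}^{\varepsilon}=\mathcal{APA}^{T}$, $\overline{\alpha}_{n,\varepsilon}=\mathcal{DPA}^{T}$, $\eta_{n,\varepsilon}=\varepsilon I+\mathcal{DPD}^{T}$ and writes $p_{n+1}^{\varepsilon}-q_{n+1}^{\varepsilon}$ explicitly as a sum of two positive-semidefinite terms, whereas you establish positive-semidefiniteness of the block matrix $\left(\begin{smallmatrix}p_{n+1}^{\varepsilon} & \overline{\alpha}_{n,\varepsilon}^{T}\\ \overline{\alpha}_{n,\varepsilon} & \eta_{n,\varepsilon}\end{smallmatrix}\right)$ by a (correct) completion of squares and invoke the Schur complement — the two arguments are equivalent.
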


\begin{remark}
i. The dependence of $\varepsilon$ in $Q$ has been dropped to simplify notation.

ii. When one further assumes that $A$ is non-random, the elements
$Q_{n-1,k}^{\cdot,l}$ are independent of $l.$ Hence,
\[
Q_{n-1,k}^{\cdot,\cdot}\Delta M_{n+1}=Q_{n-1,k}^{\cdot,1}\sum_{l=1}^{p}\left(
\left\langle L_{n+1},e_{l}\right\rangle -q_{l}\right)  =0_{m\times
1}=0_{m\times p}\Delta M_{n+1},
\]
i.e. $Q_{n-1}$ is equivalent (see, for example \cite[Definition 2]%
{Cohen_Elliott_SPA_2010}) to $0_{m\times mp}.$ This is consistent with the
results in our non-random framework.
\end{remark}

To end this subsection, let us take a look at the case when $C\left(
\cdot\right)  =0$ and $A_{n}$ is a non-random matrix. Using the second point
of the previous remark, one gets $\alpha=0_{m\times mp}$ and $\eta
_{n,\varepsilon}=\varepsilon I_{mp\times mp}$. Hence, one only has to solve
the following (deterministic, $\varepsilon$-independent scheme) :%

\[
p_{n}=A_{n}^{-1}\left(  p_{n+1}+BB^{T}\right)  \left(  A_{n}^{T}\right)
^{-1},\text{ }p_{N}=0_{m\times m}.
\]
In this framework, we get the following criterion.

\begin{criterion}
\label{CritDeterministic}Let us assume that $C\left(  \cdot\right)  =0$ and
$A_{n}$ is a non-random matrix, for all $n\geq1.$ Then, the system
(\ref{MDP2}) is null-controllable in time $N>0$ if and only if the matrix
\[
p_{0}^{N}=%
{\textstyle\sum\limits_{n=0}^{N-1}}
\left[  \left(
{\textstyle\prod\limits_{k=0}^{n}}
A_{k}^{-1}\right)  BB^{T}\left(
{\textstyle\prod\limits_{k=0}^{n}}
A_{k}^{-1}\right)  ^{T}\right]
\]
has full rank.
\end{criterion}

\begin{remark}
(a) If $A$ does not depend on $n,$ $p_{0}$ is of full rank if and only if
$A^{N}p_{0}\left(  A^{T}\right)  ^{N}$ is of full rank and one gets the
classical condition $p_{0}=%
{\textstyle\sum\limits_{n=0}^{N-1}}
A^{n}BB^{T}\left(  A^{T}\right)  ^{n}$ is of full rank. This classical
condition (and its equivalence with the usual Kalman rank condition) can be
found, for instance in \cite[Theorem 2.16]{CostaFragosoMarques} (assertions 2
and 3).

(b) In the case without multiplicative noise, one can establish (in the same
way as in the reference cited in \cite[Theorem 2.16]{CostaFragosoMarques}),
the equivalence between the Criterion \ref{CritDeterministic} and the rank
condition given for communicating classes in \cite[Theorem 2]{JiChizeck1988}.
Reasoning on the cannonical space, this criterion can be extended for more
general systems with no multiplicative noise (see also Remark
\ref{RemDimension}).
\end{remark}

\subsection{When Continuous-time Intuition Fails to Work\label{Section2.6}}

As we have seen in Example \ref{ExpNullCtrlNotAppCtrl}, the
null-controllability metric is given by a strictly weaker condition than that
of controllability. The reader acquainted with the continuous-time
characterizations of approximate controllability
(\cite{Buckdahn_Quincampoix_Tessitore_2006} or \cite{G17} for Brownian
perturbations, \cite{G10} or \cite{GoreacMartinez2015} for continuous-time
jump processes) may wonder whether alternative approaches based on invariance
concepts can be adapted to this discrete framework. The aim of this section is
to compare our approach with the algebraic conditions given in
\cite{GoreacMartinez2015} for continuous-time processes presenting a similar
construction. We will consider both the non-random coefficients setting and
the behavior of the system lacking multiplicative noise.

In the case of non-random coefficients, the method developed in
\cite{Buckdahn_Quincampoix_Tessitore_2006} for Brownian perturbations and
adapted to trend-dependent jump-systems in \cite{GoreacMartinez2015} consists
in obtaining invariance criteria starting from (\ref{MDPdual}). We will prove
that the analogous condition is still necessary in order to have
null-controllability. Nevertheless, this condition is strictly weaker than the
one exhibited in Theorem \ref{TheoremRiccati} (see Example \ref{ExpN1NotSuff}%
). Concerning the second framework, in absence of multiplicative noise, the
continuous-time condition provided in \cite[Section 4.2, Criterion
4]{GoreacMartinez2015} is neither necessary (see Example \ref{ExpncC=0NotNec})
nor sufficient (Example \ref{ExpncC=0NotSuff}).

Throughout the subsection, we assume that $L_{n}$ are independent, identically
distributed random variables on $\left\{  e_{1},e_{2},...,e_{p}\right\}  $ and
denote by $q_{i}=\mathbb{P}\left(  L_{1}=e_{i}\right)  >0,$ for every $1\leq
i\leq p.$

\subsubsection{The Non-Random Coefficients Case\label{Section2.6.1}}

To simplify the arguments, we concentrate on the time-homogeneous framework
(i.e. $A_{n}=A\in%
\mathbb{R}
^{m\times m},$ $C_{i,n}=C_{i}\in%
\mathbb{R}
^{m\times m}$, for all $n\geq0).$ In this setting, the dual decision process
(\ref{MDPdual}) becomes%
\[
y_{n+1}^{y_{0},v}:=\left[  A^{T}\right]  ^{-1}\left(  y_{n}^{y_{0}%
,v}-\overset{p}{\underset{l=1}{\sum}}\mathcal{C}^{T}\left(  l\right)
v_{n+1}e_{l}\right)  +\overset{p}{\underset{l=1}{\sum}}\left\langle \Delta
M_{n+1},e_{l}\right\rangle v_{n+1}e_{l},\text{ }y_{0}^{y_{0},v}=y_{0},
\]
where
\begin{equation}
\mathcal{C}\left(  j\right)  :\mathcal{=}\sum_{k=1}^{p}\left(  \delta
_{j,k}-q_{j}\right)  q_{k}C_{k}, \label{CcalHomogeneousNonRandom}%
\end{equation}
for every $1\leq j\leq p$.

In \cite{GoreacMartinez2015}, the study of controllability properties is
conducted using some invariance properties with respect to the dual decision
process. We recall the following invariance notions.

\begin{definition}
We consider a linear operator $\mathcal{A\in}%
\mathbb{R}
^{m\times m}$ and a family $\left(  \mathcal{C}_{i}\right)  _{1\leq i\leq
t}\subset%
\mathbb{R}
^{m\times m}$.

(a) A set $V\subset%
\mathbb{R}
^{m}$ is said to be $\mathcal{A}$- invariant if $\mathcal{A}V\subset V.$

(b) A set $V\subset%
\mathbb{R}
^{m}$ is said to be $\left(  \mathcal{A};\mathcal{C}\right)  $- invariant if
$\mathcal{A}V\subset V+\operatorname{Im}\mathcal{C}_{1}+\operatorname{Im}%
\mathcal{C}_{2}+...+\operatorname{Im}\mathcal{C}_{t},$ where
$\operatorname{Im}$ stands for the image of the linear operators.

(c) A set $V\subset%
\mathbb{R}
^{m}$ is said to be $\left(  \mathcal{A};\mathcal{C}\right)  $- strictly
invariant if
\[
\mathcal{A}V\subset V+\operatorname{Im}\left(  \mathcal{C}_{1}\Pi_{V}\right)
+\operatorname{Im}\left(  \mathcal{C}_{2}\Pi_{V}\right)
+...+\operatorname{Im}\left(  \mathcal{C}_{p}\Pi_{V}\right)  ,
\]
where $\Pi_{V}$ denotes the orthogonal projection onto $V$.

(d) A set $V\subset%
\mathbb{R}
^{n}$ is said to be feedback $\left(  \mathcal{A};\mathcal{C}\right)  $-
invariant if there exists a family of linear operators $\left(  \mathcal{F}%
_{i}\right)  _{1\leq i\leq t}\subset%
\mathbb{R}
^{m\times m}$ such that $\left(  \mathcal{A}+\sum_{i=1}^{t}\mathcal{C}%
_{i}\mathcal{F}_{i}\right)  V\subset V$ (i.e. $V$ is $\mathcal{A}+\sum
_{i=1}^{t}\mathcal{C}_{i}\mathcal{F}_{i}$- invariant).
\end{definition}

The following condition is necessary in order to have null-controllability.

\begin{proposition}
\label{PropNecInvariance}If the system (\ref{MDP2}) is null controllable at
time $N$, then, by setting
\begin{equation}
\left.
\begin{array}
[c]{c}%
\mathcal{V}^{N,N}=\ker\left(  B^{T}\right)  \text{ and computing, for }0\leq
k<N,\\
\mathcal{V}^{k,N}\text{ to be the largest subspace of the kernel }\ker\left(
B^{T}\right)  \text{ which is }\\
\left(  \left[  A^{T}\right]  ^{-1};\left(  \mathcal{C}\left(  1\right)
A^{-1}\right)  ^{T}\Pi_{\mathcal{V}^{k+1,N}},...,\left(  \mathcal{C}\left(
p\right)  A^{-1}\right)  ^{T}\Pi_{\mathcal{V}^{k+1,N}}\right)
\text{-invariant},
\end{array}
\right.  \tag{N1}\label{N1}%
\end{equation}
the space $\mathcal{V}^{0,N}$ is reduced to $\left\{  0\right\}  $.
\end{proposition}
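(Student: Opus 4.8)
The plan is to argue by contraposition through Criterion \ref{CritNullCtrl}. Recall that null-controllability is equivalent to the coercivity estimate $|y_0|^2\le k\,\mathbb{E}\big[\sum_{n=0}^{N-1}|B^T\mathbb{E}[y_{n+1}^{y_0,v}/\mathcal{F}_n]|^2\big]$ holding uniformly in $y_0$ and in the predictable control $v$. Hence it suffices to show that if $\mathcal{V}^{0,N}\neq\{0\}$, then one can exhibit a deterministic $y_0\in\mathcal{V}^{0,N}\setminus\{0\}$ together with a predictable $v$ for which $B^T\mathbb{E}[y_{n+1}^{y_0,v}/\mathcal{F}_n]=0$ for every $0\le n\le N-1$; since $|y_0|^2>0$, this destroys the estimate for every $k>0$, so the system cannot be null-controllable. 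In the homogeneous i.i.d.\ setting at hand, because $v$ is predictable and $\mathbb{E}[\langle\Delta M_{n+1},e_l\rangle/\mathcal{F}_n]=0$, the noise term in (\ref{MDPdual}) averages out and $\mathbb{E}[y_{n+1}^{y_0,v}/\mathcal{F}_n]=[A^T]^{-1}(y_n^{y_0,v}-\sum_{l=1}^p\mathcal{C}(l)^T v_{n+1}e_l)$; thus the whole problem reduces to steering this conditional mean into $\ker(B^T)$ at each step.

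First I would record two structural facts about the subspaces $\mathcal{V}^{k,N}$. The first is monotonicity, $\mathcal{V}^{0,N}\subset\mathcal{V}^{1,N}\subset\cdots\subset\mathcal{V}^{N,N}=\ker(B^T)$, proven by downward induction: since the defining invariance of $\mathcal{V}^{k,N}$ uses the image spaces $\operatorname{Im}((\mathcal{C}(j)A^{-1})^T\Pi_{\mathcal{V}^{k+1,N}})$ and these only grow when the projection is taken onto a larger subspace, the inclusion $\mathcal{V}^{k+1,N}\subset\mathcal{V}^{k+2,N}$ forces any subspace of $\ker(B^T)$ invariant at level $k$ to be invariant at level $k+1$ as well, whence $\mathcal{V}^{k,N}\subset\mathcal{V}^{k+1,N}$ by maximality. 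The second is an algebraic reading of the invariance of $\mathcal{V}^{k,N}$: using $(\mathcal{C}(j)A^{-1})^T=[A^T]^{-1}\mathcal{C}(j)^T$ and multiplying the inclusion $[A^T]^{-1}\mathcal{V}^{k,N}\subset\mathcal{V}^{k,N}+\sum_j\operatorname{Im}((\mathcal{C}(j)A^{-1})^T\Pi_{\mathcal{V}^{k+1,N}})$ on the left by $A^T$, one obtains that every $y\in\mathcal{V}^{k,N}$ admits a decomposition $y=A^T w+\sum_{j=1}^p\mathcal{C}(j)^T\zeta_j$ with $w\in\mathcal{V}^{k,N}$ and $\zeta_j\in\mathcal{V}^{k+1,N}$.

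With these in hand I would build $v$ by forward induction, maintaining the invariant $y_n^{y_0,v}\in\mathcal{V}^{n,N}$ almost surely. Pick any $y_0\in\mathcal{V}^{0,N}\setminus\{0\}$. Assuming $y_n\in\mathcal{V}^{n,N}$, apply the decomposition to $y_n$ and set the $l$-th column of $v_{n+1}$ equal to $\zeta_l$ (a measurable selection, harmless here since $y_n$ takes only finitely many values, being a function of $L_1,\dots,L_n$). Then $\sum_l\mathcal{C}(l)^T v_{n+1}e_l=\sum_l\mathcal{C}(l)^T\zeta_l=y_n-A^Tw$, so the conditional mean becomes $\mathbb{E}[y_{n+1}^{y_0,v}/\mathcal{F}_n]=[A^T]^{-1}(A^Tw)=w\in\mathcal{V}^{n,N}\subset\ker(B^T)$, which annihilates the $n$-th term of the objective. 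Moreover, writing $\langle\Delta M_{n+1},e_l\rangle=\langle L_{n+1},e_l\rangle-q_l$, on the event $\{L_{n+1}=e_j\}$ one gets $y_{n+1}^{y_0,v}=w+\zeta_j-\sum_l q_l\zeta_l$; since $w\in\mathcal{V}^{n,N}\subset\mathcal{V}^{n+1,N}$ by monotonicity and every $\zeta_l\in\mathcal{V}^{n+1,N}$, this value lies in $\mathcal{V}^{n+1,N}$ for each $j$, i.e.\ $y_{n+1}^{y_0,v}\in\mathcal{V}^{n+1,N}$ almost surely, closing the induction.

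Carrying this through $n=0,\dots,N-1$ yields $B^T\mathbb{E}[y_{n+1}^{y_0,v}/\mathcal{F}_n]=0$ for all $n$, hence the right-hand side of Criterion \ref{CritNullCtrl} vanishes while $|y_0|^2>0$, contradicting null-controllability; this establishes the contrapositive and therefore the proposition. The step I expect to be the genuine obstacle is not the control construction itself but the interplay between the projection $\Pi_{\mathcal{V}^{k+1,N}}$ in the invariance and the requirement that $y_{n+1}$ land in $\mathcal{V}^{n+1,N}$ for \emph{every} realization of the jump $L_{n+1}$, not merely in conditional mean. This realization-wise constraint is exactly what the multiplicative (marked point process) noise imposes, and it is the reason both the nesting $\mathcal{V}^{k,N}\subset\mathcal{V}^{k+1,N}$ and the precise form of $\mathcal{C}(j)$ in (\ref{CcalHomogeneousNonRandom}) must be tracked carefully; everything else (predictability, square-integrability, measurable selection) is automatic because $L$ is finite-valued.
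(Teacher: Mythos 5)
Your proof is correct and follows the same overall strategy as the paper: pick $y_0\in\mathcal{V}^{0,N}\setminus\{0\}$, build a predictable control keeping the dual trajectory of (\ref{MDPdual}) inside the nested spaces $\mathcal{V}^{n,N}\subset\ker(B^T)$, and conclude through Criterion \ref{CritNullCtrl}. The one substantive divergence is in how the control is produced: the paper invokes the equivalence between $\left(\mathcal{A};\mathcal{C}\right)$-invariance and feedback invariance (citing Schmidt--Stern and Curtain) to obtain \emph{linear} feedback operators $F^{k}(l)$ and then defines $v^{feedback}(n+1,y)=\left[\Pi_{\mathcal{V}^{n+1,N}}F^{n}(1)y,\dots,\Pi_{\mathcal{V}^{n+1,N}}F^{n}(p)y\right]$, whereas you bypass that theorem entirely by rewriting the invariance inclusion as the pointwise decomposition $y=A^{T}w+\sum_{j}\mathcal{C}(j)^{T}\zeta_{j}$ with $w\in\mathcal{V}^{k,N}$, $\zeta_{j}\in\mathcal{V}^{k+1,N}$, and selecting the columns of $v_{n+1}$ measurably (legitimate here since $y_n$ takes finitely many values). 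Your version is more self-contained and elementary; the paper's version buys a genuinely linear feedback, which matters elsewhere (e.g.\ for Markovianity remarks) but not for this proposition. You also prove explicitly the monotonicity $\mathcal{V}^{k,N}\subset\mathcal{V}^{k+1,N}$, which the paper uses but only records in Remark \ref{RemDimension}; making it explicit is a small improvement, since it is exactly what guarantees that the realization-wise value $w+\zeta_{j}-\sum_{l}q_{l}\zeta_{l}$ lands in $\mathcal{V}^{n+1,N}$ for every outcome of $L_{n+1}$.
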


The reader should compare this with \cite[Section 4.1, Criterion
3]{GoreacMartinez2015}. The proof of this result is very similar to that of
\cite[Section 3.1.2, Proposition 2]{GoreacMartinez2015} and is postponed to
Section \ref{Section4}.

\begin{remark}
\label{RemDimension}i. Both the assertion and the proof can be extended to
non-homogeneous systems providing the complete analogue of \cite[Section
3.1.2, Proposition 2]{GoreacMartinez2015}.

ii. Much as in the continuous-time framework (see \cite[Section 4.1, Criterion
3]{GoreacMartinez2015}), one can prove the equivalence between the following

(a) condition (\ref{N1}) holds true;

(b) every solution of (\ref{MDPdual}) satisfying $B^{T}y_{n}^{y_{0},v}=0$,
$\mathbb{P-}a.s.$ for all $n\geq0$ is such that $y_{0}=0.$

iii. It is obvious that the family $\left(  \mathcal{V}^{k,N}\right)  _{0\leq
k\leq N}$ is increasing in $k$ (and $N)$ and, thus, this condition only needs
to be checked for $N=m.$ Indeed, for $N\geq m$, the space $\mathcal{V}^{0,N}$
is nothing else than the largest subspace of $\ker\left(  B^{T}\right)  $
which is $\left(  \left[  A^{T}\right]  ^{-1};\left(  \mathcal{C}\left(
1\right)  A^{-1}\right)  ^{T},...,\left(  \mathcal{C}\left(  p\right)
A^{-1}\right)  ^{T}\right)  $-strictly invariant$.$ In all generality, for
systems in which $A_{n}=A\left(  L_{n}\right)  $ and $C_{i,n}=C_{i}\left(
L_{n}\right)  ,$ one defines similarly $\mathcal{C}\left(  i,L_{n}\right)  $
and families of subspaces $\mathcal{V}_{l}^{k,N}$ which are
\[
\left(  \left[  A^{T}\left(  l\right)  \right]  ^{-1};\left(  \mathcal{C}%
\left(  1,l\right)  A^{-1}\left(  l\right)  \right)  ^{T}\Pi_{\mathcal{V}%
_{1}^{k+1,N}},...,\left(  \mathcal{C}\left(  p,l\right)  A^{-1}\left(
l\right)  \right)  ^{T}\Pi_{\mathcal{V}_{p}^{k+1,N}}\right)  -invariant.
\]
The monotonicity still holds but, since $\mathcal{V}_{l}^{k,N}$ is given with
respect to all $\left(  \mathcal{V}_{i}^{k+1,N}\right)  _{1\leq i\leq p},$ in
order to be sure that the spaces no longer change, one has to wait for
$N=m^{p}.$
\end{remark}

Nevertheless, unlike the continuous-time framework, the condition (\ref{N1})
is weaker than the null-controllability property. Let us, once again, look at
the following example.

\begin{example}
\label{ExpN1NotSuff}We consider $p=2$ and the transition matrix $Q=\left(
\begin{array}
[c]{cc}%
\frac{1}{2} & \frac{1}{2}\\
\frac{1}{2} & \frac{1}{2}%
\end{array}
\right)  ,$ the horizon $N=2,$ the state space dimension $m=2$ and the control
space dimension $d=1.$ Moreover, we consider
\[
A=\left(
\begin{array}
[c]{cc}%
0 & 1\\
1 & 0
\end{array}
\right)  ,\text{ }C_{i}=\left(  -1\right)  ^{i+1}A\text{, }B=\left(
\begin{array}
[c]{c}%
0\\
1
\end{array}
\right)  ,\text{ for }i\in\left\{  1,2\right\}  .
\]
Then, according to (\ref{CcalHomogeneousNonRandom}), $\mathcal{C}\left(
i\right)  =\frac{\left(  -1\right)  ^{i+1}}{2}A$, for $i\in\left\{
1,2\right\}  .$ Moreover, $\ker\left(  B^{T}\right)  =\left\{  \left(
\begin{array}
[c]{c}%
x\\
0
\end{array}
\right)  ,\text{ }x\in%
\mathbb{R}
\right\}  .$ If $x\in%
\mathbb{R}
$ is such that, for some $x^{\prime},x^{\prime\prime}\in%
\mathbb{R}
,$%
\[
\left[  A^{T}\right]  ^{-1}\left(
\begin{array}
[c]{c}%
x\\
0
\end{array}
\right)  +\left(  \mathcal{C}\left(  1\right)  A^{-1}\right)  ^{T}\left(
\begin{array}
[c]{c}%
x^{\prime}\\
0
\end{array}
\right)  +\left(  \mathcal{C}\left(  2\right)  A^{-1}\right)  ^{T}\left(
\begin{array}
[c]{c}%
x^{\prime\prime}\\
0
\end{array}
\right)  =\left(
\begin{array}
[c]{c}%
\frac{x^{\prime}-x^{\prime\prime}}{2}\\
x
\end{array}
\right)  \in\ker\left(  B^{T}\right)  ,
\]
then it follows that $x=0.$ This means that condition (\ref{N1}) is satisfied.
However, as shown in Example \ref{ExpKalmanNotNullCtrl}, the decision system
driven by $A,B$ and $C$ is not null-controllable. Thus, in all generality, for
discrete-time processes, the condition (\ref{N1}) does not guarantee null-controllability.
\end{example}

\subsubsection{The Case Without Multiplicative Noise (C=0)\label{Section2.6.2}%
}

In the continuous-time framework, the necessary and sufficient condition for
approximate null-controllability of continuous switch systems (equally when
$C=0,$ see \cite[Section 4.2, Criterion 4]{GoreacMartinez2015}) reads%
\begin{equation}
\left(  A_{n},B\right)  \text{ is controllable for all }n.
\label{ncC=0continuous}%
\end{equation}
Unlike the continuous-time setting, we will see that this condition is neither
necessary nor sufficient.

We begin with an example showing that (\ref{ncC=0continuous}) may hold without
implying the null-controllability of the discrete system.

\begin{example}
\label{ExpncC=0NotNec}We consider the state space dimension $m=3$ and the
control space dimension $d=1.$ Moreover, we consider
\[
A_{2n+1}=\left(
\begin{array}
[c]{ccc}%
0 & 0 & 1\\
1 & 0 & 0\\
0 & 1 & 0
\end{array}
\right)  ,\text{ }A_{2n}=\left(
\begin{array}
[c]{ccc}%
0 & 1 & 0\\
0 & 0 & 1\\
1 & 0 & 0
\end{array}
\right)  ,\text{ }B=\left(
\begin{array}
[c]{c}%
1\\
0\\
0
\end{array}
\right)  .
\]
It is obvious that Kalman's condition is satisfied for both $A_{2n}$ and
$A_{2n+1}.$ However, by computing $p_{0}^{N}$ (see Criterion
\ref{CritDeterministic})$,$ one gets
\end{example}

\[
p_{0}^{N}=%
{\textstyle\sum\limits_{n=0}^{N-1}}
\left[  \left(
{\textstyle\prod\limits_{k=0}^{n}}
A_{k}^{-1}\right)  BB^{T}\left(
{\textstyle\prod\limits_{k=0}^{n}}
A_{k}^{-1}\right)  ^{T}\right]  =\left(
\begin{array}
[c]{ccc}%
\left\lfloor \frac{N}{2}\right\rfloor  & 0 & 0\\
0 & \left\lfloor \frac{N+1}{2}\right\rfloor  & 0\\
0 & 0 & 0
\end{array}
\right)
\]
which is obviously not invertible for any $N\geq1$. Here, $\left\lfloor
\cdot\right\rfloor $ denotes the largest integer that does not exceed the
argument (floor function).

But null-controllability may hold without having (\ref{ncC=0continuous}) for
any $A_{n}.$

\begin{example}
\label{ExpncC=0NotSuff}We consider the state space dimension $m=3$ and the
control space dimension $d=1.$ Moreover, we consider
\[
A_{2n+1}=\left(
\begin{array}
[c]{ccc}%
0 & 1 & 0\\
1 & 0 & 0\\
0 & 0 & 1
\end{array}
\right)  ,\text{ }A_{2n}=\left(
\begin{array}
[c]{ccc}%
1 & 0 & 0\\
0 & 0 & 1\\
0 & 1 & 0
\end{array}
\right)  ,\text{ for }n\geq0\text{, }B=\left(
\begin{array}
[c]{c}%
1\\
0\\
0
\end{array}
\right)  .
\]
Then $Rank\left[  B\text{ }A_{2n+1}B\text{ }A_{2n+1}^{2}B\right]  =2$ and
$Rank\left[  B\text{ }A_{2n}B\text{ }A_{2n}^{2}B\right]  =1.$ Nevertheless,
for $N=4,$%
\[
p_{0}^{4}=%
{\textstyle\sum\limits_{n=0}^{3}}
\left[  \left(
{\textstyle\prod\limits_{k=0}^{n}}
A_{k}^{-1}\right)  BB^{T}\left(
{\textstyle\prod\limits_{k=0}^{n}}
A_{k}^{-1}\right)  ^{T}\right]  =\left(
\begin{array}
[c]{ccc}%
1 & 0 & 0\\
0 & 1 & 0\\
0 & 0 & 2
\end{array}
\right)
\]
is of full rank such that, using Criterion \ref{CritDeterministic}, the system
is null-controllable at time $N=4.$ The reader may equally want to note that
the controllability condition does not hold true at $N^{\prime}=3=m,$ the
dimension of our state space.
\end{example}

\begin{remark}
i. These counterexamples are of particular relevance for the way one considers
the models in systems biology (see next section). It shows that targeted
design depends on the discrete or continuous modelisation.

ii. In this case (with no multiplicative noise), one can prove, by reasoning
on the cannonical space (as in \cite[Theorem 3]{JiChizeck1988}, or, again, as
in \cite[Section 4.2, Criterion 4]{GoreacMartinez2015}) that
null-controllability is characterized by Criterion \ref{CritDeterministic}
holding true on every feasible sample path. We emphasize that (as it is the
case in the proof of \cite[Section 4.2, Criterion 4]{GoreacMartinez2015}), the
explicit feedback control is given with respect to this controllability
Grammian (cf. \cite[Theorem 3 ii.]{JiChizeck1988}).
\end{remark}

\section{A Minimal Intervention-Targeted Application in Biological
Networks\label{Section3}}

The aim of this section is to provide a possible application of the previous
mathematical tools to biological networks. The mathematical considerations are
motivated by the notion of (sub)modularity (see \cite[Section 4]%
{NemhauserWolseyFisher_78}, \cite{Lovasz:1983aa}, etc.) as well as the recent
applications to power electronic actuator placement in the preprint
\cite{SummersCortesiLygeros_2014}. We describe the optimization problems
appearing when one works with several (possible) control matrices. To end the
section, we give a toy model inspired by bacteriophage $\lambda$ in
\cite{hasty_pradines_dolnik_collins_00}.

\subsection{Intervention Scenarios}

Up until now, the control matrix $B$ has been fixed. We are going to envisage
some scenarios translated by several control matrices. Each fundamental matrix
allows one-dimensional controls. By putting together some of these matrices
(say $d$), we get an $m\times d$ control matrix taking into account
$d$-dimensional controls. Of course, in the case in which several
configurations allow null controllability, it would be interesting if we were
able to find a minimal $d$ (lowest dimension for control processes) giving the
null controllability.

We begin with noting that the (pseudo)norm (\ref{ctrl_norm}) will explicitly
depend on the control matrix $B$ and will be denoted by $\left\Vert
\cdot\right\Vert _{ctrl,B}$. Similar, whenever $P_{0}^{\varepsilon}$ given by
(\ref{RiccatiGeneralIt}) exists, it is written as $P_{0}^{\varepsilon}\left(
B\right)  $. We define
\[
\left\Vert B\right\Vert _{ctrl}^{spec}:=\inf_{y\neq0}\frac{\left\Vert
y\right\Vert _{ctrl,B}}{\left\Vert y\right\Vert }\text{ and }\left\Vert
B\right\Vert _{ctrl}^{rank}:=Rank\left(  \underset{\varepsilon\rightarrow
0}{\lim\inf}P_{0}^{\varepsilon}\left(  B\right)  \right)
\]
It is obvious that the system (\ref{MDP2}) is controllable using $B$ iff
$\left\Vert B\right\Vert _{ctrl}^{spec}>0$ or, equivalently, iff $\left\Vert
B\right\Vert _{ctrl}^{rank}=m.$

A basic intervention scenario is a column vector $b\subset%
\mathbb{R}
^{m}$ allowing one-dimensional controls and specifying the weight of this
control in the state component. In other words, we consider the system
controlled by $B=b$ and with $d=1$ in (\ref{MDP2}). Given a family of $r\in%
\mathbb{N}
^{\ast}$ intervention scenarios $\left\{  b_{1},b_{2},...,b_{r}\right\}
\subset%
\mathbb{R}
^{m}$, for every $\mathcal{I=}\left\{  i_{1},i_{2},...i_{\left\vert
\mathcal{I}\right\vert }\right\}  \mathcal{\subset}\left\{  1,...r\right\}  $
one constructs $B\left(  \mathcal{I}\right)  =\left[  b_{i_{1}}%
,...,b_{i_{\left\vert \mathcal{I}\right\vert }}\right]  $. We introduce the
following two concepts.

\begin{definition}
\label{DefMEI}1) A set $\mathcal{I}$ is called minimal spectral-efficient
intervention if the following assertions hold simultaneously:

(i) one has $\left\Vert B\left(  \mathcal{I}\right)  \right\Vert
_{ctrl}^{spec}>0$;

(ii) for every $\mathcal{J\subset}\left\{  1,...r\right\}  $ such that
$\left\vert \mathcal{J}\right\vert <\left\vert \mathcal{I}\right\vert ,$ one
has $\left\Vert B\left(  \mathcal{J}\right)  \right\Vert _{ctrl}^{spec}=0;$

(iii) for every $\mathcal{J\subset}\left\{  1,...r\right\}  $ such that
$\left\vert \mathcal{J}\right\vert =\left\vert \mathcal{I}\right\vert ,$ one
has $\left\Vert B\left(  \mathcal{J}\right)  \right\Vert _{ctrl}^{spec}%
\leq\left\Vert B\left(  \mathcal{I}\right)  \right\Vert _{ctrl}^{spec}.$

2) A set $\mathcal{I}$ is called minimal rank-efficient intervention if the
following assertions hold simultaneously:

(i) one has $\left\Vert B\left(  \mathcal{I}\right)  \right\Vert
_{ctrl}^{rank}=m;$

(ii) for every $\mathcal{J\subset}\left\{  1,...r\right\}  $ such that
$\left\vert \mathcal{J}\right\vert <\left\vert \mathcal{I}\right\vert ,$ one
has $\left\Vert B\left(  \mathcal{J}\right)  \right\Vert _{ctrl}^{rank}<m.$
\end{definition}

The reader is invited to note that any minimal spectral-efficient intervention
is also minimal rank-efficient. A condition of type (iii) has no meaning for
the rank, being trivially satisfied as soon as $\mathcal{I}$ is minimal
rank-efficient$.$

To find minimal efficient intervention, one has to solve at most $r$
set-function optimization problems of type%
\[
\max_{\substack{\mathcal{I\subset}\left\{  1,...r\right\}  \\\left\vert
\mathcal{I}\right\vert =k}}\left\Vert B\left(  \mathcal{I}\right)  \right\Vert
_{ctrl},\text{ }1\leq k\leq r,
\]
where $\left\Vert \cdot\right\Vert _{ctrl}$ denotes either $\left\Vert
\cdot\right\Vert _{ctrl}^{spec}$ or $\left\Vert \cdot\right\Vert
_{ctrl}^{rank}.$ It is obvious that
\[
\min\left\{  k:1\leq k\leq r,\max_{\substack{\mathcal{I\subset}\left\{
1,...r\right\}  \\\left\vert \mathcal{I}\right\vert =k}}\left\Vert B\left(
\mathcal{I}\right)  \right\Vert _{ctrl}^{rank}=m\right\}  =\min\left\{
k:1\leq k\leq r,\max_{\substack{\mathcal{I\subset}\left\{  1,...r\right\}
\\\left\vert \mathcal{I}\right\vert =k}}\left\Vert B\left(  \mathcal{I}%
\right)  \right\Vert _{ctrl}^{spec}>0\right\}  .
\]
At this point, one may note that working with minimal spectral-efficient
interventions gives more information and may wonder why we have introduced the
two concept. It turns out that, although both set functions are
non-decreasing, rank-based functions have another useful (submodularity)
property (cf. \cite{NemhauserWolseyFisher_78}, \cite{Lovasz:1983aa}; see also
\cite{SummersCortesiLygeros_2014}). Let us recall the definition of this concept.

\begin{definition}
Given a finite set $S$, a real-valued function $f:2^{S}\longrightarrow%
\mathbb{R}
$ is said to be submodular if%
\[
f\left(  S_{1}\cap S_{2}\right)  +f\left(  S_{1}\cup S_{2}\right)  \leq
f\left(  S_{1}\right)  +f\left(  S_{2}\right)  ,
\]
for all subsets $S_{1},S_{2}\subset S.$
\end{definition}

According to \cite{NemhauserWolseyFisher_78}, submodularity is "a
combinatorial analogue of concavity" in the sense that if the cost functional
is submodular, although the problem is NP-hard, a greedy approach provides
good results. The paper \cite[Section 4]{NemhauserWolseyFisher_78} equally
provides greedy heuristics as well as relative error bounds concerning the
greedy solution.

A glance at \cite[Example 1.2]{Lovasz:1983aa} shows that rank-based set
functions are submodular. It turns out that, although it provides more
information, $\left\Vert \cdot\right\Vert _{ctrl}^{spec}$ does not, in all
generality, provide a submodular application. For an example in this
direction, the reader is referred to \cite[III.F]{SummersCortesiLygeros_2014}.

To sum up the considerations made so far, one should begin with solving the
optimization problem using $\left\Vert \cdot\right\Vert _{ctrl}^{rank}$ which
is faster (using greedy heuristic as in \cite[Section 4]%
{NemhauserWolseyFisher_78})$.$ This will provide a minimal $k$ for which
efficient interventions exist. Then, for this particular $k$, one may work
with $\left\Vert \cdot\right\Vert _{ctrl}^{spec}.$

\subsection{Hasty et al.-Inspired Toy Model}

In systems biology, one uses various methods (graphs in
\cite{GaySolimanFages2010}, tropical aspects \cite{SolimanFagesRadulescu2014},
differential equations \cite{MadelaineLhoussaineNiehren}, stochastic analysis,
etc.) to analyse and reduce the same system. The construction is more or less
automatic given the system of reactions. Moreover the qualitative properties
are implicitly thought to be the same in any type of (coherent) model (ODE,
pure jumps, stochastic hybrid, etc.).

A reversible equation $\left(  A+B\overset{K}{\rightleftarrows}C+D\right)  $
allows to obtain\ the ratio of the constants ($K=\left(  k_{+},k_{-}\right)
$) by the so-called \textquotedblleft law of mass action\textquotedblright%
\ going back to the considerations of \cite{C.M.GuldbergandP.Waage}.\ These
hints are valid \textquotedblleft at equilibrium\textquotedblright\ when the
state is invariant. In other words, in stochastic semantics, this is valid as
ergodic behavior/ invariant law, etc. But then, by going over the simplest
examples (and this is equally confirmed by discussions with researchers in
bioinformatics), we learn that the actual values of the reaction constants are
taken from different tables available in the literature and using various
\textquotedblleft normalizations\textquotedblright\ (see, for example,
\cite{crudu_debussche_radulescu_09}). As consequence, even for the simplest
model (Cook, cf. \cite{cook_gerber_tapscott_98}), different values of
parameters (e.g. half life-time) lead to different behaviors (slow unstable,
fast unstable, stable, etc. ). From our point of view, it seems fair to assume
that changing external factors induces changes in the dynamics and this
translates in control.

\textbf{Description} We start with a toy example concerning the temperate
$\lambda$ virus. The phage model is considered to be the standard element in
recombineering (through its Red operon) in order to get a targeted response.
Now, if we take the model developed in \cite{hasty_pradines_dolnik_collins_00}%
, the difference between entering lytic phase or remaining in lysogenic one is
linked to the possibility of keeping CI repressor away from or onto 0. So, in
order for the procedure to be interesting, one has to be able to drive CI
repressor to given targets (corresponding to lytic stage).

The authors of \cite{hasty_pradines_dolnik_collins_00} propose a genetic
applet consisting in a mutant system in which two operator sites (OR2 and OR3)
are present. The gene cI expresses repressor (CI), which dimerizes and binds
to the DNA as a transcription factor in one of the two available sites. The
site OR2 leads to enhanced transcription, while OR3 represses transcription.
We let $R_{1}$ stand for the repressor, $R_{2}$ for the dimer, $D$ for the DNA
promoter site, $DR_{2}$ for the binding to the OR2 site, $DR_{2}^{\ast}$ for
the binding to the OR3 site and $DR_{2}R_{2}$ for the binding to both sites.
We also denote by $P$ the RNA\ polymerase concentration and by $r$ the number
of repressors per mRNA transcript. The capital letters $K_{i},$ $1\leq i\leq4$
for the reversible reactions correspond to couples of direct/reverse speed
functions $k_{i},k_{-i},$ while $K_{t}$ and $K_{d}$ only to direct speed
functions $k_{t}$ and $k_{d}$.The actual system of biochemical reactions that
govern the genetic applet is given by%
\[
\left\{
\begin{array}
[c]{l}%
2R_{1}\overset{K_{1}}{\rightleftarrows}R_{2},\text{ }D\left(  +R_{2}\right)
\overset{K_{2}}{\rightleftarrows}DR_{2},\text{ }D\left(  +R_{2}\right)
\overset{K_{3}}{\rightleftarrows}DR_{2}^{\ast},\\
DR_{2}\left(  +R_{2}\right)  \overset{K_{4}}{\rightleftarrows}DR_{2}%
R_{2},\text{ }DR_{2}+P\overset{K_{t}}{\rightarrow}DR_{2}+P+rR_{1},\text{
}R_{1}\overset{K_{d}}{\rightarrow}.
\end{array}
\right.
\]

From the stochastic point of view, one can either take Gillespie's method
(pure jumps, cf. \cite{Gillespie1977}) or average on some components (getting
a PDMP mechanism, cf. \cite{crudu_debussche_radulescu_09},
\cite{crudu_Debussche_Muller_Radulescu_2012}). The two types of models are
said to be coherent with each-other if the steady distributions are close (see
\cite{crudu_debussche_radulescu_09}, page 15, Figure I, b). This is, again,
natural, because ergodic behavior of the PDMP often relates to that of the
natural associated Markov chain (e.g. \cite{Costa_Dufour_08}).

But, since these systems (discrete/continuous) are \textquotedblleft
equivalent\textquotedblright\ in the acceptation of bioinformatics, entering
lytic stage for one model of phage lambda or the other should also be
equivalent. This is why we have proposed this paper going along the path of
\cite{GoreacMartinez2015}.

Our concerns are : \ Can one characterize the targeted behavior (by adapting
variable parameters) ? If explicit conditions can be given, are these
"universal" (independent of the way of modelling) ? What are the minimal
parameters on which one must play in order to get the desired targeted
behavior ?

\textbf{The Trend Component }$L$ We consider the trend component given by the
DNA mechanism of the host E-Coli
\[
\left(  D,DR_{2},DR_{2}^{\ast},DR_{2}R_{2}\right)  ^{T}\text{ which belongs to
the basis }\mathcal{B\subset}%
\mathbb{R}
^{4}.
\]
All the reactions concerning at least one of these components is considered to
belong to the trend mechanism. The remaining reactions
\[
2R_{1}\overset{K_{1}}{\rightleftarrows}R_{2},R_{1}\overset{K_{d}}{\rightarrow}%
\]
will be employed to describe the repressor's updating. To simplify the
arguments (recall that this is a toy model), we consider that all the speeds
in the trend mechanism are unitary ($k_{\pm2}=k_{\pm3}=k_{\pm4}=k_{t}=1$).
Whenever the system is at position $e_{1}$ (unoccupied host DNA), two
reactions are possible $D\overset{k_{2}}{\rightarrow}DR_{2}$, respectively
$D\overset{k_{3}}{\rightarrow}DR_{2}^{\ast}$. We consider that transition
probability is proportional to the speed of reaction (similar to
\cite{Gillespie1977}) to get
\[
\mathbb{P}\left(  L_{n+1}=e_{2}/L_{n}=e_{1}\right)  =\frac{k_{2}}{k_{2}+k_{3}%
},\text{ respectively }\mathbb{P}\left(  L_{n+1}=e_{3}/L_{n}=e_{1}\right)
=\frac{k_{3}}{k_{2}+k_{3}}.
\]
Similar constructions are true for the remaining transitions. Obviously, this
does not correspond to the independent framework since the transition matrix
\[
\mathbb{Q}^{0}\mathbb{=}\left(
\begin{array}
[c]{cccc}%
0 & \frac{k_{2}}{k_{2}+k_{3}} & \frac{k_{3}}{k_{2}+k_{3}} & 0\\
\frac{k_{-2}}{k_{-2}+k_{4}} & 0 & 0 & \frac{k_{4}}{k_{-2}+k_{4}}\\
1 & 0 & 0 & 0\\
0 & 1 & 0 & 0
\end{array}
\right)  =\left(
\begin{array}
[c]{cccc}%
0 & \frac{1}{2} & \frac{1}{2} & 0\\
\frac{1}{2} & 0 & 0 & \frac{1}{2}\\
1 & 0 & 0 & 0\\
0 & 1 & 0 & 0
\end{array}
\right)  .
\]
Nevertheless, we shall assume that the host is at equilibrium prior to
infection and it is easy to see that the unique invariant measure is the
uniform law given by
\begin{equation}
q_{1}=q_{2}=q_{3}=q_{4}=\frac{1}{4}. \label{qHasty}%
\end{equation}

\textbf{The updating matrices }$A_{n}$ To the transitions $2R_{1}%
\overset{K_{1}}{\rightleftarrows}R_{2}$ and $R_{1}\overset{k_{d}}{\rightarrow
}$ one usually associates the ordinary differential equations
\[
\frac{dr_{1}}{dt}=-2k_{1}r_{1}^{2}-k_{d}r_{1}+2k_{-1}r_{2},\text{ }%
\frac{dr_{2}}{dt}=k_{1}r_{1}^{2}-k_{-1}r_{2}.
\]
By writing down the associated Jacobian matrix at some point $r^{0}=\left(
r_{1}^{0},r_{2}^{0}\right)  $, one gets a first-order approximation $\Delta
r=\left(
\begin{array}
[c]{cc}%
-4k_{1}r_{1}^{0}-k_{d} & 2k_{-1}\\
2k_{1}r_{1}^{0} & -k_{-1}%
\end{array}
\right)  r.$ In other words, one constructs the matrix
\[
A=I_{2\times2}+\Delta r=\left(
\begin{array}
[c]{cc}%
1-4k_{1}r_{1}^{0}-k_{d} & 2k_{-1}\\
2k_{1}r_{1}^{0} & 1-k_{-1}%
\end{array}
\right)  .
\]
If $r_{1}^{0}=0,$ then the updating of the dimer is done independently of the
repressor which is not very realistic. For our toy model, we consider
$2k_{1}r_{1}^{0}=k_{d}=k_{-1}=\frac{1}{4}$ i.e.
\begin{equation}
A=\left(
\begin{array}
[c]{cc}%
\frac{1}{4} & \frac{1}{2}\\
\frac{1}{4} & \frac{3}{4}%
\end{array}
\right)  . \label{AHasty}%
\end{equation}
In this framework, $25\%$ of the repressor molecules are degraded ($k_{d}$),
$50\%$ (i.e $4k_{1}r_{1}^{0}$) bind together to produce a total of
$\frac{4k_{1}r_{1}^{0}}{2}r_{1}$ dimers and $25\%$ remain unaltered. For the
dimer, $25\%$ (i.e. $k_{-1}$) break to produce $2k_{-1}r_{2}$ repressors and
$75\%$ remain unaltered.

\begin{remark}
Another way of defining $A_{n}$ would be to keep for $\Delta r$ the actual
Jacobian evaluated at the expectation of uncontrolled $X_{n}$ i.e.
$A_{n}=\left(
\begin{array}
[c]{cc}%
1-4k_{1}\mathbb{E}\left[  X_{n}^{1}\right]  -k_{d} & 2k_{-1}\\
2k_{1}\mathbb{E}\left[  X_{n}^{1}\right]  & 1-k_{-1}%
\end{array}
\right)  $, then compute $\left(
\begin{array}
[c]{c}%
\mathbb{E}\left[  X_{n}^{1}\right] \\
\mathbb{E}\left[  X_{n}^{2}\right]
\end{array}
\right)  =A_{n-1}\left(
\begin{array}
[c]{c}%
\mathbb{E}\left[  X_{n-1}^{1}\right] \\
\mathbb{E}\left[  X_{n-1}^{2}\right]
\end{array}
\right)  ,$ etc.
\end{remark}

\textbf{The Multiplicative Noise }Changes in the trend component have an
effect on the couple repressor/dimer in the transcription phase $DR_{2}+P$
\ $\overset{K_{t}}{\rightarrow}DR_{2}+P+rR_{1}.$ A careful look at the
biochemical reactions shows that binding to the promoter site needs a dimer
per binding. Since the DNA mechanism is assumed to be at equilibrium, the
number of "averaged" occupied promoter sites can be set proportional to
$R_{2}$. This reaction will result in a production of $r$ copies per existing
dimer as soon as the trend is set to $e_{2}.$ This implies that
\begin{equation}
C_{2,n}=\left(
\begin{array}
[c]{cc}%
0 & r\\
0 & 0
\end{array}
\right)  .
\end{equation}
The remaining states induce no noise i.e.
\begin{equation}
C_{i,n}=0_{2\times2},\text{ for }i\in\left\{  1,3,4\right\}  .
\end{equation}
Again, in to simplify the arguments, we assume $r=1.$ We also drop the
dependency on $n.$

We deal with a scaled repressor/dimer component and this is why we add these
as pure jump zero-mean contributions. Alternate models are available (see, for
example \cite{Goreac_SIAM_2015}).

\textbf{One vs. Multi-dimensional controls. }At this point, we envisage four
scenarios concerning the couple repressor/dimer : no external control, control
only the dimer, (same one-dimensional) control on both the dimer and repressor
or control (two-dimensional) on each state. To control the dimer, respectively
mutually control the couple repressor/dimer, one uses $b_{1}=\left(
\begin{array}
[c]{c}%
0\\
1
\end{array}
\right)  ,$ respectively $b_{2}=\left(
\begin{array}
[c]{c}%
1\\
1
\end{array}
\right)  .$ To control the two components independently, one uses $\left[
b_{1},b_{2}\right]  \in%
\mathbb{R}
^{2\times2}$ (which is equivalent, up to renaming the controls, to the use of
$B=I_{2\times2}$). Note that these scenarios correspond to selecting a subset
of $\left\{  1,2\right\}  $.

When $B=b_{1},$ one computes $\left(  A^{T}\right)  ^{-1}=\left(
\begin{array}
[c]{cc}%
12 & -4\\
-8 & 4
\end{array}
\right)  $ and $\left(  A^{T}\right)  ^{-1}C_{2}^{T}=\allowbreak\left(
\begin{array}
[c]{cc}%
-4 & 0\\
4 & 0
\end{array}
\right)  $ and notes that $\ker B^{T}$ is $\left(  A^{T}\right)
^{-1}+2\left(  A^{T}\right)  ^{-1}C_{2}^{T}$-invariant and, thus, $\left(
\left(  A^{T}\right)  ^{-1};\left(  A^{T}\right)  ^{-1}\mathcal{C}^{T}\right)
-$strictly invariant (with the notations of Proposition
\ref{PropNecInvariance}). Therefore, the system is not null-controllable
according to Proposition \ref{PropNecInvariance}.

When $B=b_{2},$ we compute the explicit solution of (\ref{MDP2}) associated to
a particular choice of the control as follows.\ For every $x=\left(
\begin{array}
[c]{c}%
x^{1}\\
x^{2}%
\end{array}
\right)  \in%
\mathbb{R}
^{2}$, we set
\[
u_{1}=-\frac{1}{4}x^{1}-\frac{3}{4}x^{2}\text{ and }u_{2}=-\frac{1}{4}\left(
\left\langle L_{1},e_{2}\right\rangle -\frac{1}{2}\right)  x^{2},
\]
to get
\[
X_{1}^{x,u}=\left(
\begin{array}
[c]{c}%
\left(  \left\langle L_{1},e_{2}\right\rangle -\frac{1}{2}\right)  x^{2}\\
0
\end{array}
\right)  ,\text{ }X_{2}^{x,u}=0_{2\times1}.
\]
Then, the system is null-controllable.

\begin{remark}
Alternatively, one can use Proposition \ref{PropBSRDSNonRandom} and compute
$\underset{\varepsilon\rightarrow0+}{\lim\inf}P_{0}^{\varepsilon}$ starting
from $P_{2}^{\varepsilon}=0_{2\times2}.$ For small values of $\varepsilon
\simeq10^{-10}$, numerical values stabilize around $\left(
\begin{array}
[c]{cc}%
592 & -192\\
-192 & 64
\end{array}
\right)  $ which is positive-definite. Its minimal eigenvalue is $\left\Vert
B\left(  \left\{  2\right\}  \right)  \right\Vert _{ctrl}^{spec}=\left\Vert
b_{2}\right\Vert _{ctrl}^{spec}\simeq1.5647078.$
\end{remark}

The system is also controllable if $B=\left[  b_{1},b_{2}\right]  .$ In view
of Definition \ref{DefMEI}, it follows that $\mathcal{I=}\left\{  2\right\}  $
provides a minimal efficient intervention plan. In this case, rank and
spectral control norms provide the same (unique) answer.

\textbf{Conclusion and future work}

One is able to characterize the null controllability of discrete-time systems
associated to biochemical reactions through an explicit controllability metric
given by a backward stochastic Riccati scheme (\ref{RiccatiGeneralIt}) through
Theorem \ref{TheoremRiccati}. In particular, if the biological imperative asks
to be able to drive a specific protein to 0, a discrete-time model in which
$\underset{\varepsilon\rightarrow0+}{\lim\inf}P_{0}^{\varepsilon}$ is not
positive definite is not correct. This metric is specific to the discrete-time
models (equivalence between approximate null and exact null controllability in
continuous-time is less obvious).

The explicit algebraic conditions on the coefficients are not universal.
Achieving controllability for continuous-time models does not guarantee
similar behavior for discrete ones (cf. Section \ref{Section2.6}).
Furthermore, while in continuous-time controlling systems with non-random
coefficients to zero guarantees approximate controllability to any random
target, this is no longer valid for the discrete-time model (cf. Example
\ref{ExpNullCtrlNotAppCtrl}). Thus, even though the ergodic behavior of the
discrete or PDMP models lead to the same reaction speeds, choosing one model
or another leads to qualitative differences (in getting lytic behavior and,
hence, recombination).

Finally, if several parameters can be altered, the controllability metric
provides a minimal-intervention scenario (minimal actions on the reactions).
In particular, knowing this scenario, reduces the manipulation costs (reaction
speed depends on adjuvants, temperature, catalyzers, etc.)

Several open questions remain. From a theoretical point of view, we work on
getting explicit algebraic conditions leading to approximate controllability
(not only towards 0) in the general multiplicative models with random
coefficients (both in continuous and discrete framework). In the continuous
framework, some general sufficient condition can obtained by using the
explicit reduction of BSDE to systems of ODE (from \cite{CFJ_2014}) in the
same spirit as the proof of Proposition \ref{PropNecInvariance}. At
application level, similar procedures can be envisaged for objective-based
systems reduction. In this case, the decision is made at subgraph selection
level : what reactions to be suppressed and what reactions to be added to
preserve a given property. The aim in this framework is to give the smallest
set of reactions allowing to achieve the goal. This makes the object of
on-going research in both discrete and continuous framework. The method could
combine the simplification set of rules in \cite{MadelaineLhoussaineNiehren}
for the deterministic case and the subgraph selection in
\cite{SummersCortesiLygeros_2014} (but according to the stochastic metric
introduced in the present paper).

\section{Proofs of Theorems \ref{PropNSC} and \ref{TheoremRiccati},
Solvability Propositions \ref{PropBSRDSNonRandom} and \ref{PropBSRDSC=0} and
Necessary Condition (Proposition \ref{PropNecInvariance})\label{Section4}}

\subsection{Proof of the Main Results}

We begin with the proof of the duality-based characterization of
controllability concepts.

\begin{proof}
[Proof of Theorem \ref{PropNSC}]The first two assertions are direct
consequences of the duality properties. One easily notes that
\begin{align*}
&  \mathbb{E}\left[  \left\langle X_{n+1}^{x,u},Y_{n+1}^{N,\xi}\right\rangle
/\mathcal{F}_{n}\right] \\
&  =\left\langle A_{n}X_{n}^{x,u}+Bu_{n+1},\mathbb{E}\left[  Y_{n+1}^{N,\xi
}/\mathcal{F}_{n}\right]  \right\rangle +\left\langle X_{n}^{x,u}%
,\mathbb{E}\left[  \sum_{i=1}^{p}\left\langle \Delta M_{n+1},e_{i}%
\right\rangle C_{i,n}^{T}Z_{n}^{N,\xi}\Delta M_{n+1}/\mathcal{F}_{n}\right]
\right\rangle \\
&  =\left\langle X_{n}^{x,u},A_{n}^{T}\mathbb{E}\left[  Y_{n+1}^{N,\xi
}/\mathcal{F}_{n}\right]  +\sum_{i=1}^{p}C_{i,n}^{T}Z_{n}^{N,\xi}%
\mathbb{E}\left[  \left\langle \Delta M_{n+1},e_{i}\right\rangle \Delta
M_{n+1}/\mathcal{F}_{n}\right]  \right\rangle +\left\langle Bu_{n+1}%
,\mathbb{E}\left[  Y_{n+1}^{N,\xi}/\mathcal{F}_{n}\right]  \right\rangle \\
&  =\left\langle X_{n}^{x,u},Y_{n}^{N,\xi}\right\rangle +\left\langle
Bu_{n+1},\mathbb{E}\left[  Y_{n+1}^{N,\xi}/\mathcal{F}_{n}\right]
\right\rangle .
\end{align*}
Hence, by iterating, one gets
\begin{equation}
\mathbb{E}\left[  \left\langle X_{N}^{x,u},Y_{N}^{N,\xi}\right\rangle \right]
=\left\langle x,Y_{0}^{N,\xi}\right\rangle +%
{\textstyle\sum\limits_{n=0}^{N-1}}
\mathbb{E}\left[  \left\langle u_{n+1},B^{T}\mathbb{E}\left[  Y_{n+1}^{N,\xi
}/\mathcal{F}_{n}\right]  \right\rangle \right]  . \label{duality}%
\end{equation}
One proceeds in a classical manner by considering the two linear operators
\begin{align*}
R_{N}^{1}  &  :\mathcal{P}red\mathcal{\longrightarrow}\mathbb{L}^{2}\left(
\Omega,\mathcal{F}_{N},\mathbb{P};%
\mathbb{R}
^{m}\right)  ,\text{ }R_{N}^{1}\left(  u\right)  =X_{N}^{0,u},\text{ for all
}u\in\mathcal{P}red,\\
R_{N}^{2}  &  :%
\mathbb{R}
^{m}\mathcal{\longrightarrow}\mathbb{L}^{2}\left(  \Omega,\mathcal{F}%
_{N},\mathbb{P};%
\mathbb{R}
^{m}\right)  ,\text{ }R_{N}^{2}\left(  x\right)  =X_{N}^{x,0},\text{ for all
}x\in\mathbb{R}^{m}.
\end{align*}
The reader is invited to recall that $\mathcal{P}red$ stands for the family of
$%
\mathbb{R}
^{d}$-valued, $\mathbb{F}$-predictable controls. (It is considered here as a
subspace of product of $\mathbb{L}^{2}\left(  \Omega,\mathcal{F}%
_{n},\mathbb{P};%
\mathbb{R}
^{d}\right)  $-spaces.) In view of (\ref{duality}), the adjoints of the linear
operators are given by
\begin{equation}
\left(  R_{N}^{1}\right)  ^{\ast}\left(  \xi\right)  =\left(  B^{T}%
\mathbb{E}\left[  Y_{n}^{N,\xi}/\mathcal{F}_{n-1}\right]  \right)  _{n\geq
1},\text{ }\left(  R_{N}^{2}\right)  ^{\ast}\left(  \xi\right)  =Y_{0}^{N,\xi
}, \label{adjoints}%
\end{equation}
for all $\xi\in\mathbb{L}^{2}\left(  \Omega,\mathcal{F}_{N},\mathbb{P};%
\mathbb{R}
^{m}\right)  .$ Then, approximate null-controllability is equivalent to the
image (range) inclusion $\operatorname{Im}\left(  R_{N}^{2}\right)  \subset
cl\left(  \operatorname{Im}\left(  R_{N}^{1}\right)  \right)  $, where $cl$ is
the usual Kuratowski closure operator. Furthermore, this is equivalent to the
kernel inclusion $\ker\left(  \left(  R_{N}^{1}\right)  ^{\ast}\right)
\subset\ker\left(  \left(  R_{N}^{2}\right)  ^{\ast}\right)  $ which leads to
the second assertion. Similar, approximate controllability is equivalent to
$cl\left(  \operatorname{Im}\left(  R_{N}^{1}\right)  \right)  =\mathbb{L}%
^{2}\left(  \Omega,\mathcal{F}_{N},\mathbb{P};%
\mathbb{R}
^{m}\right)  .$ Hence, it is equivalent to $\ker\left(  \left(  R_{N}%
^{1}\right)  ^{\ast}\right)  =\left\{  0\right\}  $ which leads to the first assertion.

For the third assertion, since $\Omega$ is assumed to be the sample space, it
follows that $\mathbb{L}^{2}\left(  \Omega,\mathcal{F}_{N},\mathbb{P};%
\mathbb{R}
^{m}\right)  $ can be seen as $%
\mathbb{R}
^{mp^{N}}$. Hence, the linear subspace $\operatorname{Im}\left(  R_{N}%
^{1}\right)  $ is finite-dimensional and, thus, $cl\left(  \operatorname{Im}%
\left(  R_{N}^{1}\right)  \right)  =\operatorname{Im}\left(  R_{N}^{1}\right)
$. In this case, approximate null-controllability is written down as
$\operatorname{Im}\left(  R_{N}^{2}\right)  \subset\operatorname{Im}\left(
R_{N}^{1}\right)  $ (i.e. one actually has exact null-controllability), or,
equivalently (see, for example, \cite[Appendix B, Proposition B.1]%
{DaPratoZabczyk1992}),
\[
\left\vert \left(  R_{N}^{2}\right)  ^{\ast}\xi\right\vert \leq k\left\Vert
\left(  R_{N}^{1}\right)  ^{\ast}\xi\right\Vert _{\mathcal{P}red},\text{ for
some }k>0.
\]
The necessary and sufficient condition (\ref{0ctrlInequality}) follows from
(\ref{adjoints}).
\end{proof}

We now give the proof of the second main result of the paper providing the
link between the controllability (pseudo)norm and the backward stochastic
Riccati difference scheme.

\begin{proof}
[Proof of Theorem \ref{TheoremRiccati}]For the first assertion, using the
particular form of $\alpha_{n,\varepsilon}$ and $\eta_{n,\varepsilon}$, one
simply writes down%
\begin{align*}
&  \left\langle P_{n}^{\varepsilon}y_{n}^{y_{0},v},y_{n}^{y_{0},v}%
\right\rangle \\
&  =\mathbb{E}\left[  \left\langle P_{n+1}^{\varepsilon}\left[  A_{n}%
^{T}\right]  ^{-1}y_{n}^{y_{0},v},\left[  A_{n}^{T}\right]  ^{-1}y_{n}%
^{y_{0},v}\right\rangle /\mathcal{F}_{n}\right]  +\left\vert B^{T}\left[
A_{n}^{T}\right]  ^{-1}y_{n}^{y_{0},v}\right\vert ^{2}-\left\langle
\alpha_{n,\varepsilon}^{T}\eta_{n,\varepsilon}^{-1}\alpha_{n,\varepsilon}%
y_{n}^{y_{0},v},y_{n}^{y_{0},v}\right\rangle \\
&  =\mathbb{E}\left[  \left\langle P_{n+1}^{\varepsilon}y_{n+1}^{y_{0}%
,v},y_{n+1}^{y_{0},v}\right\rangle /\mathcal{F}_{n}\right]  +\varepsilon
\left\vert v_{n+1}\right\vert ^{2}+\left\vert B^{T}\mathbb{E}\left[
y_{n+1}^{y_{0},v}/\mathcal{F}_{n}\right]  \right\vert ^{2}-\left\vert
\eta_{n,\varepsilon}^{-1/2}\alpha_{n,\varepsilon}y_{n}^{y_{0},v}%
-\eta_{n,\varepsilon}^{1/2}v_{n+1}\right\vert ^{2}.
\end{align*}
By iterating and taking expectation, one gets%
\begin{align}
\left\langle P_{0}^{\varepsilon}y_{0},y_{0}\right\rangle  &  =\varepsilon
\sum_{n=0}^{N-1}\mathbb{E}\left[  \left\vert v_{n+1}\right\vert ^{2}\right]
+\mathbb{E}\left[  \sum_{n=0}^{N-1}\left\vert B^{T}\mathbb{E}\left[
y_{n+1}^{y_{0},v}/\mathcal{F}_{n}\right]  \right\vert ^{2}\right] \nonumber\\
&  -\sum_{n=0}^{N-1}\mathbb{E}\left[  \left\vert \eta_{n,\varepsilon}%
^{-1/2}\delta_{n,\varepsilon}y_{n}^{y_{0},v}-\eta_{n,\varepsilon}^{1/2}%
v_{n+1}\right\vert ^{2}\right]  . \label{optimalityPeps}%
\end{align}
If the system (\ref{MDP2}) is (approximately) null-controllable, then there
exists some positive constant $c>0$ such that
\[
\inf_{\left(  v_{n}\right)  _{1\leq n\leq N}\text{ }\mathbb{F}%
\text{-predictable}}\mathbb{E}\left[  \sum_{n=0}^{N-1}\left\vert
B^{T}\mathbb{E}\left[  y_{n+1}^{y_{0},v}/\mathcal{F}_{n}\right]  \right\vert
^{2}\right]  \geq c\left\vert y_{0}\right\vert ^{2}.
\]
In particular, by taking the feedback control $v_{n+1}^{\varepsilon}%
:=\eta_{n,\varepsilon}^{-1}\delta_{n,\varepsilon}y_{n}^{y_{0},v^{\varepsilon}%
},$ one establishes that
\[
\left\langle P_{0}^{\varepsilon}y_{0},y_{0}\right\rangle \geq c\left\vert
y_{0}\right\vert ^{2}%
\]
and the conclusion follows.

Conversely, if $\underset{\varepsilon\rightarrow0+}{\lim\inf}P_{0}%
^{\varepsilon}\geq cI,$ for some $c>0,$ then, for every $\varepsilon>0$ small
enough and every predictable control $v$, one gets
\[
\varepsilon\sum_{n=0}^{N-1}\mathbb{E}\left[  \left\vert v_{n+1}\right\vert
^{2}\right]  +\mathbb{E}\left[  \sum_{n=0}^{N-1}\left\vert B^{T}%
\mathbb{E}\left[  y_{n+1}^{y_{0},v}/\mathcal{F}_{n}\right]  \right\vert
^{2}\right]  \geq\frac{c}{2}\left\vert y_{0}\right\vert ^{2}%
\]
and the conclusion follows by letting $\varepsilon\rightarrow0$.

For the second assertion, one notes that (\ref{optimalityPeps}) implies that
\[
\left\langle P_{0}^{\varepsilon}y_{0},y_{0}\right\rangle =\inf_{\left(
v_{n}\right)  _{1\leq n\leq N}\text{ }\mathbb{F}\text{-predictable}}\left(
\varepsilon\sum_{n=0}^{N-1}\mathbb{E}\left[  \left\vert v_{n+1}\right\vert
^{2}\right]  +\mathbb{E}\left[  \sum_{n=0}^{N-1}\left\vert B^{T}%
\mathbb{E}\left[  y_{n+1}^{y_{0},v}/\mathcal{F}_{n}\right]  \right\vert
^{2}\right]  \right)
\]
and the conclusion follows by letting $\varepsilon\rightarrow0$.
\end{proof}

\subsection{Proof of the Solvability Results}

We begin with the proof for the solvability of the BSRDS in the case of
non-random coefficients.

\begin{proof}
[Proof of Proposition \ref{PropBSRDSNonRandom}]The proof is given by
(descending) induction. For $n=N,$ it is clear that $Q_{N-1}^{\varepsilon}=0$.
Since $A_{N-1}$ and $\mathcal{C}_{N-1}$ are deterministic, it is clear that
the iterative step defining $P_{N-1}^{\varepsilon}$ in scheme
(\ref{RiccatiGeneralIt}) reduces to (\ref{RiccatiNonRandomCoeff}). Let us
assume that $P_{n+1}^{\varepsilon}$ has been constructed according to this
deterministic scheme and is a positive-semidefinite (non-random)matrix. Then,
$Q_{n}^{\varepsilon}=0.$ Since $A_{n}$ and $C_{n}$ are deterministic, the
definition of $P_{n}^{\varepsilon}$ according to scheme
(\ref{RiccatiGeneralIt}) reduces to (\ref{RiccatiNonRandomCoeff}). We only
need to prove that this latter scheme is consistent and provides
positive-semidefinite matrices. One begin with noting that as soon as
$P_{n+1}^{\varepsilon}$ is positive-semidefinite, the matrix
\[
\Delta_{n+1}^{\varepsilon}:=\left(  q_{j}\left(  \delta_{j,k}-q_{k}\right)
P_{n+1}^{\varepsilon}\right)  _{1\leq j,k\leq p}\in%
\mathbb{R}
^{mp\times mp}%
\]
is also positive-semidefinite. Indeed, it suffices to set $DD^{T}=\left(
q_{j}\left(  \delta_{j,k}-q_{k}\right)  \right)  _{1\leq j,k\leq p}$ given by
Cholesky decomposition and $\mathcal{D}_{j,k}:=D_{j,k}I_{m\times m},$ for all
$1\leq j,k\leq p.$ Then
\[
\Delta_{n+1}^{\varepsilon}=\mathcal{D}\left(
\begin{array}
[c]{ccccc}%
P_{n+1}^{\varepsilon} & 0_{m\times m} & 0_{m\times m} & ... & 0_{m\times m}\\
0_{m\times m} & P_{n+1}^{\varepsilon} & 0_{m\times m} & ... & 0_{m\times m}\\
0_{m\times m} & 0_{m\times m} & P_{n+1}^{\varepsilon} & ... & 0_{m\times m}\\
... & ... & ... & ... & ...\\
0_{m\times m} & 0_{m\times m} & 0_{m\times m} & ... & P_{n+1}^{\varepsilon}%
\end{array}
\right)  \mathcal{D}^{T}%
\]
is obviously positive-semidefinite. It follows that $\eta_{n,\varepsilon}$
given by (\ref{RiccatiNonRandomCoeff}) is positive-definite. Second, using a
classical intuition on feedback optimal control, one writes
\begin{align*}
&  A_{n}^{-1}\left(  P_{n+1}^{\varepsilon}+BB^{T}\right)  \left[  A_{n}%
^{T}\right]  ^{-1}-\alpha_{n,\varepsilon}^{T}\eta_{n,\varepsilon}^{-1}%
\alpha_{n,\varepsilon}\\
&  =\left[  A_{n}^{-1}-\alpha_{n,\varepsilon}^{T}\eta_{n,\varepsilon}%
^{-1}\mathcal{C}_{n}A_{n}^{-1}\right]  \left(  P_{n+1}^{\varepsilon}%
+BB^{T}\right)  \left[  \left[  A_{n}^{-1}\right]  ^{T}-\left[  A_{n}%
^{-1}\right]  ^{T}\mathcal{C}_{n}\eta_{n,\varepsilon}^{-1}\alpha
_{n,\varepsilon}\right] \\
&  +\alpha_{n,\varepsilon}^{T}\eta_{n,\varepsilon}^{-1}\left(  \varepsilon
I_{mp\times mp}+\Delta_{n+1}^{\varepsilon}\right)  \eta_{n,\varepsilon}%
^{-1}\alpha_{n,\varepsilon}.
\end{align*}
This implies that $P_{n}^{\varepsilon}$ is positive-semidefinite whenever
$P_{n+1}^{\varepsilon}$ is positive-semidefinite and the induction step is complete.
\end{proof}

The second proof concerns the solvability of the BSRDS in the absence of
multiplicative noise (i.e. $C=0$).

\begin{proof}
[Proof of Proposition \ref{PropBSRDSC=0}]One begins with setting%
\[
p_{N}^{\varepsilon}=0\text{ and }q_{N}^{\varepsilon}=0
\]
and notes that $P_{N-1}^{\varepsilon}$ given by (\ref{RiccatiC=0}) satisfies
\[%
\begin{array}
[c]{l}%
P_{N-1}^{\varepsilon}=A_{N-1}^{-1}BB^{T}\left[  A_{N-1}^{T}\right]
^{-1}=A^{-1}\left(  N-1,L_{N-1}\right)  \left(  p_{N}^{\varepsilon}%
+BB^{T}-q_{N}^{\varepsilon}\right)  \left(  A^{-1}\left(  N-1,L_{N-1}\right)
\right)  ^{T}.
\end{array}
\]
Next, one recalls that $Q_{N-2}=\left[  Q_{N-2,1}\text{ }Q_{N-2,2}\text{ ...
}Q_{N-2,m}\right]  ,$ where $Q_{N-2,j}\in%
\mathbb{R}
^{m\times p}$. \ One easily computes
\[
\left[  Q_{N-2,1}^{\cdot,l},Q_{N-2,2}^{\cdot,l},...,Q_{N-2,m}^{\cdot
,l}\right]  =A^{-1}\left(  N-1,e_{l}\right)  \left(  p_{N}^{\varepsilon
}+BB^{T}-q_{N}^{\varepsilon}\right)  \left(  A^{-1}\left(  N-1,e_{l}\right)
\right)  ^{T},
\]
for all $1\leq l\leq p$ $.$ Therefore, the conclusion holds true for $n=N-1.$
We proceed by (decreasing) induction and assume the conclusion to hold true
for $n+1$ and prove it for $n\leq N-2.$ One easily notes that,due to the
recurrence assumption, the following equalities hold true for $\alpha$ and
$\eta$ computed as in (\ref{RiccatiC=0}).
\[
\left\{
\begin{array}
[c]{l}%
\alpha_{n,\varepsilon}^{j}=-\overline{\alpha}_{n,\varepsilon}^{j}\left[
A_{n}^{T}\right]  ^{-1},\text{ where }\\
\overline{\alpha}_{n,\varepsilon}^{j}=\left[
{\textstyle\sum\limits_{l=1}^{p}}
q_{l}\left(  \delta_{j,l}-q_{j}\right)  A^{-1}\left(  n+1,e_{l}\right)
\left(  p_{n+2}^{\varepsilon}+BB^{T}-q_{n+2}^{\varepsilon}\right)  \left(
A^{-1}\left(  n+1,e_{l}\right)  \right)  ^{T}\right]  \text{ and}\\
\eta_{n,\varepsilon}^{j,k}=\varepsilon\delta_{j,k}I_{m\times m}+%
{\textstyle\sum\limits_{l=1}^{p}}
q_{l}\left(  q_{j}-\delta_{j,l}\right)  \left(  q_{k}-\delta_{k,l}\right)
A^{-1}\left(  n+1,e_{l}\right)  \left(  p_{n+2}^{\varepsilon}+BB^{T}%
-q_{n+2}^{\varepsilon}\right)  \left(  A^{-1}\left(  n+1,e_{l}\right)
\right)  ^{T},
\end{array}
\right.
\]
for all $1\leq j,k\leq p$. We set
\begin{equation}
\left\{
\begin{array}
[c]{l}%
p_{n+1}^{\varepsilon}:=\mathbb{E}\left[  P_{n+1}^{\varepsilon}/\mathcal{F}%
_{n}\right]  =%
{\textstyle\sum\limits_{l=1}^{p}}
q_{l}A^{-1}\left(  n+1,e_{l}\right)  \left(  p_{n+2}^{\varepsilon}%
+BB^{T}-q_{n+2}^{\varepsilon}\right)  \left(  A^{-1}\left(  n+1,e_{l}\right)
\right)  ^{T}\text{,}\\
q_{n+1}^{\varepsilon}=\overline{\alpha}_{n,\varepsilon}^{T}\eta_{n,\varepsilon
}^{-1}\overline{\alpha}_{n,\varepsilon}.
\end{array}
\right.  \label{pqn+1'}%
\end{equation}
We will see in just one moment that $\eta_{n,\varepsilon}^{-1}$ (hence,
$q_{n+1}^{\varepsilon}$) is consistent. By (\ref{RiccatiC=0}), it follows
that
\[
P_{n}^{\varepsilon}=A_{n}^{-1}\left(  p_{n+1}^{\varepsilon}+BB^{T}%
-q_{n+1}^{\varepsilon}\right)  \left[  A_{n}^{T}\right]  ^{-1}.
\]
For $Q,$ the assertion is obtained as in the first step. We come back to the
quantities $p_{n+1}^{\varepsilon}$ and $q_{n+1}^{\varepsilon}$ given by
(\ref{pqn+1'}) and show that they are well-defined and satisfy (\ref{pngeqn}).
The induction assumption $p_{n+2}^{\varepsilon}\geq q_{n+2}^{\varepsilon}$
implies that $p_{n+1}^{\varepsilon}$ is positive-semidifinite. Second, with
the notations
\begin{align*}
\mathcal{A}  &  :\mathcal{=}\left(  \sqrt{q_{1}}A^{-1}\left(  n+1,e_{1}%
\right)  ,...,\sqrt{q_{p}}A^{-1}\left(  n+1,e_{p}\right)  \right)  \in%
\mathbb{R}
^{m\times mp},\\
\mathcal{P}  &  :\mathcal{=}\left(  \delta_{j,k}\left(  p_{n+2}^{\varepsilon
}+BB^{T}-q_{n+2}^{\varepsilon}\right)  \right)  _{1\leq j,k\leq p}\in%
\mathbb{R}
^{mp\times mp},\text{ }\\
\mathcal{D}  &  :\mathcal{=}\left(  \sqrt{q_{k}}\left(  \delta_{j,k}%
-q_{j}\right)  A^{-1}\left(  n+1,e_{k}\right)  \right)  _{1\leq j,k\leq p}\in%
\mathbb{R}
^{mp\times mp},
\end{align*}
one has
\[
\eta_{n,\varepsilon}=\varepsilon I_{mp\times mp}+\mathcal{DPD}^{T}>0\text{ and
}\overline{\alpha}_{n,\varepsilon}=\mathcal{DPA}^{T}.
\]
For the inequality, we have used the induction assumption $p_{n+2}%
^{\varepsilon}\geq q_{n+2}^{\varepsilon}.$ As consequence, $q_{n+1}%
^{\varepsilon}$ is well-defined and positive-semidefinite. Finally,
\begin{align*}
&  p_{n+1}^{\varepsilon}-q_{n+1}^{\varepsilon}=\mathcal{APA}^{T}%
-\mathcal{APD}^{T}\left(  \varepsilon I_{mp\times mp}+\mathcal{DPD}%
^{T}\right)  ^{-1}\mathcal{DPA}^{T}\\
&  =\left(  \mathcal{A-APD}^{T}\left(  \varepsilon I_{mp\times mp}%
+\mathcal{DPD}^{T}\right)  ^{-1}\mathcal{D}\right)  \mathcal{P}\left(
\mathcal{A-APD}^{T}\left(  \varepsilon I_{mp\times mp}+\mathcal{DPD}%
^{T}\right)  ^{-1}\mathcal{D}\right)  ^{T}\\
&  +\varepsilon\mathcal{APD}^{T}\left(  \varepsilon I_{mp\times mp}%
+\mathcal{DPD}^{T}\right)  ^{-1}\left(  \varepsilon I_{mp\times mp}%
+\mathcal{DPD}^{T}\right)  ^{-1}\mathcal{DPA}^{T},
\end{align*}
which is clearly positive-semidefinite. Our induction step is now complete and
the conclusion follows.
\end{proof}

\subsection{Proof of Proposition \ref{PropNecInvariance}}

\begin{proof}
[Proof of Proposition \ref{PropNecInvariance}]We begin with setting
$\mathcal{V}^{N,N}=\ker\left(  B^{T}\right)  .$ We proceed for $0\leq k<N,$
and denote by $\mathcal{V}^{k,N}$ the largest subspace of $\ker\left(
B^{T}\right)  $ which is $\left(  \left[  A^{T}\right]  ^{-1};\left(
\mathcal{C}\left(  1\right)  A^{-1}\right)  ^{T}\Pi_{\mathcal{V}^{k+1,N}%
},...,\left(  \mathcal{C}\left(  p\right)  A^{-1}\right)  ^{T}\Pi
_{\mathcal{V}^{k+1,N}}\right)  -$ invariant. According to \cite[Theorem
3.2]{Schmidt_Stern_80} (see also \cite[Lemma 4.6]{Curtain_86}), the set
$\mathcal{V}^{k,N}$ is equally $\left(  \left[  A^{T}\right]  ^{-1};\left(
\mathcal{C}\left(  1\right)  A^{-1}\right)  ^{T}\Pi_{\mathcal{V}^{k+1,N}%
},...,\left(  \mathcal{C}\left(  p\right)  A^{-1}\right)  ^{T}\Pi
_{\mathcal{V}^{k+1,N}}\right)  -$feedback invariant. Thus, there exists a
family of linear operators $\left(  F^{k}\left(  l\right)  \right)  _{1\leq
l\leq p}\subset%
\mathbb{R}
^{m\times m}$ such that $\mathcal{V}^{k,N}$ is $\left(  \left[  A^{T}\right]
^{-1}+\sum_{l=1}^{p}\left(  \mathcal{C}\left(  l\right)  A^{-1}\right)
^{T}\Pi_{\mathcal{V}^{k+1,N}}F^{k}\left(  l\right)  \right)  $- invariant. We
consider the linear stochastic system
\[
\left\{
\begin{array}
[c]{l}%
x_{n+1}^{y_{0}}:=\left(  \left[  A^{T}\right]  ^{-1}+\sum_{l=1}^{p}\left(
\mathcal{C}\left(  l\right)  A^{-1}\right)  ^{T}\Pi_{\mathcal{V}^{n+1,N}}%
F^{n}\left(  l\right)  \right)  x_{n}^{y_{0}}+\overset{p}{\underset{l=1}{\sum
}}\left\langle \Delta M_{n+1},e_{l}\right\rangle \Pi_{\mathcal{V}^{n+1,N}%
}F^{n}\left(  l\right)  x_{n}^{y_{0}},\\
x_{0}^{y_{0}}=y_{0}\in\mathcal{V}^{0,N}.
\end{array}
\right.
\]
Then $x_{n+1}^{y_{0}}$ coincides with the solution of (\ref{MDPdual})
associated to the feedback control $v^{feedback}\left(  n+1,y\right)  =\left[
\Pi_{\mathcal{V}^{n+1,N}}F^{n}\left(  1\right)  y,...,\Pi_{\mathcal{V}%
^{n+1,N}}F^{n}\left(  p\right)  y\right]  ,$ for all $n\geq0.$ Moreover,
whenever $y_{0}\in\mathcal{V}^{0,N}$, one gets $x_{n+1}^{y_{0}}\in
\mathcal{V}^{n+1,N}$, $\mathbb{P-}a.s.$ for all $n\geq0$. In particular,
recalling that $\mathcal{V}^{n+1,N}$ $\subset\ker\left(  B^{T}\right)  $, it
follows that $B^{T}\mathbb{E}\left[  y_{n+1}^{y_{0},v^{feedback}}%
/\mathcal{F}_{n}\right]  =0$, $\mathbb{P-}a.s.$ for all $n\geq0$. By our
controllability assumption and Criterion \ref{CritNullCtrl}, one deduces
$y_{0}=0$ and our proposition is complete by recalling that $y_{0}%
\in\mathcal{V}^{0,N}$ is arbitrary.
\end{proof}

\bibliographystyle{ieeetr}
\bibliography{bibliografie_17042016}

\end{document}